\newcommand*{\Theorem}{Theorem}
\newcommand*{\Proposition}{Proposition}
\newcommand*{\Lemma}{Lemma}
\newcommand*{\Corollary}{Corollary}
\newcommand*{\Definition}{Definition}
\newcommand*{\Question}{Question}
\newcommand*{\Remark}{Remark}
\newcommand*{\Notation}{Notation}
\theoremstyle{plain}
\newtheorem{theorem}{\Theorem}
\newtheorem{proposition}[theorem]{\Proposition}
\newtheorem{corollary}[theorem]{\Corollary}
\newtheorem{lemma}[theorem]{\Lemma}
\theoremstyle{definition}
\newtheorem{definition}[theorem]{\Definition}
\newtheorem{question}{\Question}
\theoremstyle{remark}
\newtheorem{remark}[theorem]{\Remark}
\renewcommand{\epsilon}{\varepsilon}
\newcommand{\U}{\mathcal{U}}
\newcommand{\Uidem}{\U_{\textup{\rm idem}}}
\newcommand{\Umin}{\U_{\textup{\rm min}}}
\newcommand{\F}{\mathcal{F}}
\newcommand{\FS}[1]{{\mathrm{FS}(#1)}}
\DeclareMathOperator*{\iplim}{\textup{IP-lim}}
\newcommand{\ulim}[2]{{#1}\textup{-}\!\lim_{#2}}
\newcommand{\orbc}{\overline{\textsl{Orb}}}
\DeclareMathSymbol{\two}{\mathalpha}{letters}{`2}
\title{Minimal idempotent ultrafilters and the Auslander-Ellis theorem}
\author{Alexander P.\ Kreuzer}
\address{Department of Mathematics \\
Faculty of Science \\
National University of Singapore \\
Block S17, 10 Lower Kent Ridge Road \\
Singapore 119076 
}
\email{matkaps@nus.edu.sg}
\urladdr{\url{http://www.math.nus.edu.sg/~matkaps/}}
\subjclass[2010]{Primary: 03B30, Secondary: 03F35, 54H20, 05D10}
\keywords{Reverse mathematics, special ultrafilters, central sets}
\thanks{I thank Vassilis Gregoriades for useful discussions.}
\thanks{The author was partly supported by the RECRE project and by the Ministry of Education of Singapore through grant R146-000-184-112 (MOE2013-T2-1-062).}
\begin{document}

\begin{abstract}
  We characterize the existence of minimal idempotent ultrafilters (on $\Nat$) in the style of reverse mathematics and higher-order reverse mathematics using the Auslander-Ellis theorem and a variant thereof.

  We obtain that the existence of minimal idempotent ultrafilters restricted to countable algebras of sets is equivalent to the Auslander-Ellis theorem (\lp{AET}) and that the existence of minimal idempotent ultrafilters as higher-order objects is $\Pi^1_2$-conservative over a refinement of \lp{AET}.
\end{abstract}

\maketitle

We characterize the existence of minimal idempotent ultrafilters (on $\Nat$) in the style of reverse mathematics and higher-order reverse mathematics using the Auslander-Ellis theorem and a variant thereof.

The set of all ultrafilters on $\Nat$ can be identified with the Stone-\v{C}ech compactification $\beta\Nat$ of $\Nat$. The natural number are represented in $\beta\Nat$ as the principal ultrafilters via the embedding
\[
n\longmapsto \{\, X\subseteq \Nat \mid n\in X \,\}\in \beta\Nat.
\]
One can show that one can extend the addition of $\Nat$ to $\beta\Nat$ in the following way.
\[
\U + \mathcal{V} = \{\, X \subseteq \Nat \mid \{\,n\in \Nat \mid X-n \in \mathcal{V} \,\} \in \mathcal{U}\,\}
.\]
An ultrafilter $\U$ is called \emph{minimal} if one of the following equivalent conditions holds:
\begin{enumerate}[label=(\arabic*)]
\item $\U$ belongs to a minimal ideal of $(\beta\Nat,+)$,
\item $(\U + \beta\Nat, \sigma)$ with $\sigma\colon \U \longmapsto \U + 1$ is a minimal dynamical system (with respect to the usual topology on $\beta\Nat$ generate by the basic open sets $B(X) := \{ \mathcal{V}\in \beta \Nat \mid X\in \mathcal{V} \}$),
\item\label{enum:minsyn} For each $X\in \U$ the set $\{ n\in \Nat \mid X-n \in \U\}$ is syndetic.
  (Recall that a set $X\subseteq \Nat$ is called \emph{syndetic} if there is an $m$ such that for each $x$ we have $X\cap [x,x+m] \neq \emptyset$.)
\end{enumerate}
(see Hindman, Strauss \cite{HS12}).

We will use \ref{enum:minsyn} as definition for minimal ultrafilters, since it does not refer to any subsets of the Stone-\v{C}ech compactification and can, therefore, be expressed with the lowest quantifier complexity.

An ultrafilter $\U$ is called \emph{idempotent} if $\U=\U+\U$ and \emph{minimal idempotent} if it is minimal and idempotent.

Our interest in minimal idempotent ultrafilters stems from the fact that they are widely used in ergodic theory and combinatorics, see e.g.\ Carlson, Simpson \cite{CS84}, Carlson \cite{tC88}, Gowers \cite{wG92}, and Bergelson \cite{vB10}.

First we will restrict our attention to countable collections of sets as Hirst did in his analysis of idempotent ultrafilters in \cite{jH04}. Here, we will show that the statement that for a countable collection of sets a minimal idempotent ultrafilter restricted to this collection exists is equivalent  to the Auslander-Ellis theorem (\lp{AET}) over \ls{RCA_0}.

Then we will analyze the existence of minimal idempotent ultrafilter in a higher-order system (\ls{RCA_0^\omega}) building on previous work in \cite{aK12b,aK15}. We will see that the $\Pi^1_2$\nobreakdash-consequences are equivalent to refinements of the Auslander-Ellis theorem (\lp{eAET}, \lp{eAET'}, depending on the precise formulation of the existence of the ultrafilter). Beside of this the strength of \lp{eAET},\lp{eAET'} remains unknown.

The paper is organized in the following way. In \prettyref{sec:aet} we will recall the Auslander-Ellis theorem and define \lp{eAET} and \lp{eAET'}, in \prettyref{sec:mincount} we will define and analyze the statement that a minimal idempotent ultrafilter restricted to a countable algebra exists, and in \prettyref{sec:minhigh} we will analyze the general case in the higher-order setting.

\section{The Auslander-Ellis theorem}\label{sec:aet}

\begin{definition}\mbox{}
  \begin{itemize}
  \item Let $(\mathcal{X},d)$ be a compact metric space and let $T\colon \mathcal{X}\longrightarrow \mathcal{X}$ be a continuous mapping. Then we call $(\mathcal{X},T)$ a \emph{compact topological dynamical system}.
  \item A point $x$ in  $(\mathcal{X},T)$ is called \emph{uniformly recurrent} if for each $\epsilon > 0$ the set $\{ n\in \Nat \mid d(T^n x,x) < \epsilon \}$ is syndetic.
  \item A pair of points $x,y$ in $(\mathcal{X},T)$ is called \emph{proximal} if for each $\epsilon > 0$ there are infinitely many  $n$ with $d(T^n x, T^n y) < \epsilon$. 
  \end{itemize}
\end{definition}

\begin{definition}
  The \emph{Auslander-Ellis theorem} (\lp{AET}) is the statement that for each compact topological dynamical system $(\mathcal{X},T)$ and each point $x\in \mathcal{X}$ there exists a uniformly recurrent point $y$ such that $x,y$ are proximal.
\end{definition}

Recall that \emph{Hindman's theorem} (\lp{HT}) is the statement that for each coloring $c\colon \Nat \longrightarrow 2$ of the natural numbers there exists an infinite set $X=\{x_0,x_1,\dots\}$ such that the set of finite sums of $X$ 
\[
\FS{X}= \FS{(x_i)_i} := \left\{\, x_{i_k} + \dots + x_{i_1} \sizeMid i_1 < i_2 < \dots < i_k \,\right\}
\]
is homogeneous for $c$. The \emph{iterated Hindman's theorem} (\lp{IHT}) is the statement that for each sequence of colorings $c_k\colon \Nat \longrightarrow 2$ there exists a strictly ascending sequence $(x_i)_{i\in\Nat}$ such that for each $k$ the set $\FS{(x_i)_{i=k}^\infty}$ is homogeneous for $c_k$. It is known that \lp{HT}, \lp{IHT} are provable in \lp{ACA_0^+} and imply \lp{ACA_0}. However it is open where between these systems \lp{HT} and \lp{IHT} lie, and whether they are equivalent.

Blass, Hirst, and Simpson showed in \cite{BHS87} that \lp{AET} is provable in $\lp{ACA_0}+\lp{IHT}$ and thus in \lp{ACA_0^+}.
In fact, \lp{AET} is equivalent to \lp{IHT} over \ls{RCA_0}, see \prettyref{cor:aetiht} below.

We will write  
\[
\iplim_{n\to \FS{(n_i)}} x_n = x
,\]
for the statement for each $\epsilon$ there exists a $k$ such that for all finite sums $n \in \FS{(n_i)_{i\ge k}}$ we have $d(x_n,x) < \epsilon$.

\begin{proposition}[\ls{RCA_0}]\label{pro:ip}
  For $(\mathcal{X},T)$, $x,y$ as in the Auslander-Ellis theorem the system \ls{RCA_0} proves that there exists an increasing sequence $(n_i)_{i\in\Nat}$ such that
  \[
  \iplim_{n\to \FS{(n_i)}} T^n x = y
  .\]
\end{proposition}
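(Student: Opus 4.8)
The plan is to construct the sequence $(n_i)$ by recursion, maintaining at each finite stage the invariant that \emph{every} finite sum produced so far moves both $y$ and $x$ close to $y$. The two hypotheses on $(\mathcal X,T)$ enter through a single alignment step that, crucially, produces times which are simultaneously (approximate) return times for $y$ and approach times carrying $x$ near $y$.

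First I would fix a decreasing sequence $\epsilon_1 > \epsilon_2 > \dots \to 0$ of rational thresholds and prove the following alignment claim in \ls{RCA_0}: for every $\theta > 0$ and every $N$ there is an $n > N$ with $d(T^n y, y) < \theta$ and $d(T^n x, y) < \theta$. To see this, use uniform recurrence to fix a gap $g$ witnessing that $R := \{ m \mid d(T^m y, y) < \theta/2\}$ is syndetic, and choose (by uniform continuity on the compact space $\mathcal X$, applied to the finite family $T^0,\dots,T^g$) a $\delta>0$ such that $d(a,b)<\delta$ implies $d(T^j a, T^j b) < \theta/2$ for all $j \le g$. By proximality pick a proximal time $n_0 > N$ with $d(T^{n_0}x, T^{n_0}y) < \delta$, and by syndeticity pick $j^* \le g$ with $n_0 + j^* \in R$. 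Then $n := n_0 + j^*$ works: $d(T^n y, y) < \theta/2$ directly, while $d(T^n x, y) \le d(T^{j^*}(T^{n_0}x), T^{j^*}(T^{n_0}y)) + d(T^n y, y) < \theta/2 + \theta/2$. The point is that the same $n$ serves both $x$ and $y$; a naive intersection of the syndetic return times of $y$ with the merely infinite set of proximal times could be empty, so the shift by the small, bounded $j^*$ is what does the real work.

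Next I would run the recursion. Writing $P_j$ for the finite set of nonempty finite sums of $n_1,\dots,n_j$, I maintain that every $\tau \in P_j$ with least index $k$ satisfies $d(T^\tau y, y) < \epsilon_k/2$ and $d(T^\tau x, y) < \epsilon_k$. To choose $n_{j+1}$, note that a new finite sum has the form $\sigma = \tau + n_{j+1}$ with $\tau \in P_j \cup \{0\}$ and that $T^\sigma = T^\tau \circ T^{n_{j+1}}$, so the already-fixed power $T^\tau$, whose modulus of continuity is available, is applied on the outside. Applying the alignment claim with $N = n_j$ and a threshold $\theta_{j+1}$ small enough that $T^\tau$ moves points within $\theta_{j+1}$ of $y$ to within $\epsilon_k/2$ of $T^\tau y$ for every $\tau \in P_j$ (and $\theta_{j+1} \le \epsilon_{j+1}/2$), I obtain $n_{j+1} > n_j$ with $T^{n_{j+1}}y$ and $T^{n_{j+1}}x$ both $\theta_{j+1}$-close to $y$. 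Then $d(T^\sigma y, y) \le d(T^\tau(T^{n_{j+1}}y), T^\tau y) + d(T^\tau y, y)$ and likewise for $x$, which re-establishes the invariant for all of $P_{j+1}$ (old sums are unaffected).

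Finally, the invariant yields the conclusion: given $\epsilon$, pick $k$ with $\epsilon_k < \epsilon$; every $n \in \FS{(n_i)_{i \ge k}}$ has least index $\ge k$, hence $d(T^n x, y) < \epsilon_k < \epsilon$, so $\iplim_{n \to \FS{(n_i)}} T^n x = y$. I expect the main obstacle to be exactly the alignment step, together with the bookkeeping needed to stay inside \ls{RCA_0}: all thresholds are rational, the sets $P_j$ are finite and explicitly computable, the conditions $d(T^n \cdot, y) < \theta$ are $\Sigma^0_1$, and the alignment claim guarantees the search for $n_{j+1}$ terminates, so $(n_i)$ is defined by an ordinary effective recursion. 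The decision to factor $T^\sigma = T^\tau\circ T^{n_{j+1}}$ rather than $T^{n_{j+1}}\circ T^\tau$ is essential, since only the fixed power $T^\tau$ has a usable modulus, whereas the modulus of the large power $T^{n_{j+1}}$ is not controlled.
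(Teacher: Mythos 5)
Your overall strategy---an alignment claim producing times that simultaneously return $y$ near itself and carry $x$ near $y$, followed by a recursion over the finite sums using the factorization $T^{\tau+n_{j+1}}=T^{\tau}\circ T^{n_{j+1}}$ so that only the already-fixed power needs a modulus---is the same as the paper's. Your proof of the alignment claim is in fact more direct: the paper routes it through the minimality of $\orbc(y)$ (\prettyref{lem:min}) and a finite subcover of $\orbc(y)$ by the sets $T^{-n}(B(y,\epsilon))$ (\prettyref{lem:forb}), whereas you observe that, since that cover is only ever applied to the point $T^{n_0}y$, the syndeticity of $\{m \mid d(T^m y,y)<\theta/2\}$ --- i.e.\ uniform recurrence itself --- already supplies the bounded shift $j^*\le g$. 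That shortcut is sound and dispenses with two lemmas. (Your appeal to a modulus of uniform continuity for $T^0,\dots,T^g$ is exactly the appeal the paper also makes, so no complaint there.)

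The genuine problem is that your inductive invariant does not propagate. You require $d(T^\tau y,y)<\epsilon_k/2$ for every $\tau\in P_j$ with least index $k$, but for a new sum $\sigma=\tau+n_{j+1}$ your own estimate yields only $d(T^\sigma y,y)\le d\bigl(T^\tau(T^{n_{j+1}}y),T^\tau y\bigr)+d(T^\tau y,y)<\epsilon_k/2+\epsilon_k/2=\epsilon_k$, twice what the invariant demands; at the following stage the bound on $d(T^{\sigma+n_{j+2}}x,y)$ becomes $3\epsilon_k/2$, and after $l$ further summands it is roughly $(l+1)\epsilon_k/2$, so in the limit nothing is controlled. The repair is standard: either make the per-stage tolerance summable (demand that $T^\tau$ move $\theta_{j+1}$-close points to within, say, $2^{-(j+1)}\epsilon_k$ of $T^\tau y$, and weaken the invariant to a bound of the form $\epsilon_k/2\cdot(1-2^{-(j-k)})$ so that the errors telescope below $\epsilon_k/2$), or do what the paper does and replace metric bounds by exact set containments, recursively defining neighborhoods $U_{i+1}:=B(y,2^{-(i+1)})\cap U_i\cap T^{-n_i}U_i$ of $y$ so that the invariant $T^{n_{i_k}+\dots+n_{i_1}}x\in U_{i_1}$ holds with no loss. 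With that adjustment your argument goes through in \ls{RCA_0} exactly as you describe.
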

To prove this proposition we need the following lemmata. For this fix $(\mathcal{X},T)$.
We will denote by $\orbc(y)$ the orbit closure of $y$, i.e.~the set 
\[
\left\{\, x\in\mathcal{X} \sizeMid \Forall{\epsilon>0}\Exists{n}\, d(T^n y,x)<\epsilon \,\right\}
.\]
\begin{lemma}[\ls{RCA_0}]\label{lem:min}
  If $y$ is uniformly recurrent, then for each $z\in \orbc(y)$ we have $\orbc(y)=\orbc(z)$. In other words, $\orbc(y)$ is minimal.
\end{lemma}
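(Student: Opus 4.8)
The plan is to prove the two inclusions separately, isolating the uniform recurrence of $y$ in the one place it is actually needed. First I would record the elementary fact, call it $(\ast)$, that for \emph{any} point $v$ and any $w\in\orbc(v)$ one has $\orbc(w)\subseteq\orbc(v)$; this requires no recurrence. Indeed, given $u\in\orbc(w)$ and $\epsilon>0$, pick $p$ with $d(T^p w,u)<\epsilon/2$, then use the pointwise continuity of the single map $T^p$ at the point $w$ to choose $\delta$ with $d(T^q v,w)<\delta\Rightarrow d(T^{q+p}v,T^p w)<\epsilon/2$, and finally use $w\in\orbc(v)$ to find such a $q$; then $n:=q+p$ witnesses $d(T^n v,u)<\epsilon$. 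Observe that this invokes only continuity of one iterate at one point, so it is unproblematic in \ls{RCA_0}.

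Granting $(\ast)$, the inclusion $\orbc(z)\subseteq\orbc(y)$ is immediate from the hypothesis $z\in\orbc(y)$. For the reverse inclusion it suffices, by $(\ast)$ with the roles of $y$ and $z$ exchanged, to show $y\in\orbc(z)$; this is the heart of the matter and the only point where uniform recurrence is used. Here is how I would establish it. Fix $\epsilon>0$. By uniform recurrence of $y$ applied to $\epsilon/2$, the set $R=\{\,n\mid d(T^n y,y)<\epsilon/2\,\}$ is syndetic; fix a syndeticity constant $m$, so that every interval $[x,x+m]$ meets $R$. Using the continuity at $z$ of the finitely many iterates $T^0,\dots,T^m$, choose a single $\delta>0$ such that $d(w,z)<\delta$ implies $d(T^j w,T^j z)<\epsilon/2$ for all $0\le j\le m$. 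Since $z\in\orbc(y)$, pick $k$ with $d(T^k y,z)<\delta$, and by syndeticity pick $n'\in R\cap[k,k+m]$, say $n'=k+j$ with $0\le j\le m$. Then $d(T^{k+j}y,T^j z)<\epsilon/2$ by the choice of $\delta$ (with $w=T^k y$), while $d(T^{k+j}y,y)<\epsilon/2$ because $n'\in R$, and the triangle inequality yields $d(T^j z,y)<\epsilon$. Thus $n:=j$ is the required witness and $y\in\orbc(z)$.

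The main obstacle is less the combinatorics than keeping the whole argument inside \ls{RCA_0}. The naive formulation would appeal to uniform continuity of $T$ on all of $\mathcal{X}$, which is not available below \ls{WKL_0}; the point to get right is that, once $\epsilon$ (and hence $m$) is fixed, only the finitely many iterates $T^0,\dots,T^m$ occur, and each is used solely through its continuity at the single point $z$, so a common $\delta$ can be taken as a finite minimum. Together with the observations that locating $k$ is an unbounded $\Sigma^0_1$ search justified by $z\in\orbc(y)$, and locating $n'$ is a bounded search inside $[k,k+m]$, this finitary use of continuity shows every step is formalizable in \ls{RCA_0}. Combining the two inclusions gives $\orbc(y)=\orbc(z)$, i.e.\ $\orbc(y)$ is minimal.
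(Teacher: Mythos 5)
Your proof is correct and is essentially the paper's argument in contrapositive form: both combine the syndeticity constant $m$ from uniform recurrence with continuity of the finitely many iterates $T^1,\dots,T^m$ at the point $z$ and an approximation of $z$ by some $T^k y$; the paper derives a contradiction from the assumption that $y$ is at positive distance from $\orbc(z)$, where you directly exhibit $j\le m$ with $d(T^j z,y)<\epsilon$. Your explicit step $(\ast)$ is also present implicitly in the paper, since it is what justifies passing from $\orbc(z)\neq\orbc(y)$ to $y$ being bounded away from $\orbc(z)$.
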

\begin{proof}
  It is clear that $\orbc(z) \subseteq \orbc(y)$. Suppose that $\orbc(z) \neq \orbc(y)$. Then there exists an $\epsilon$ such that each point in $\orbc(z)$ is more than $3\epsilon$ apart from $y$.
  Since $y$ is uniformly recurrent we have an $m$ such that in $(T^n(y))_n$ at least every $m$-th element is $\epsilon$-close to $y$.
  However, there is also a sequence $(n_k)$ such that $T^{n_k}y \longrightarrow z$.
  By continuity of $T$, we can find an $\epsilon'$ such that for each $z'$ that is $\epsilon'$\nobreakdash-close to $z$ the iterates $Tz',T^2z',\dots,T^mz'$ are all $\epsilon$-close to $Tz$ resp.\ $T^2z,\dots,T^mz$.
  Since $Tz,T^2z,\dots,T^mz\in \orbc(z)$ these elements have a distance of $3\epsilon$ to $y$ and therefore $Tz',T^2z',\dots,T^mz'$ are at least $2\epsilon$ apart from $y$. 
  Now choosing a $k$ such that $d(T^{n_k}y,z)<\epsilon'$ yields that $T^{1+n_k}y,T^{2+n_k}y,T^{m+n_k}y$ are all $2\epsilon$ apart from $y$ contradicting the fact that $y$ is uniformly recurrent.
\end{proof}

\begin{lemma}[\ls{RCA_0}]\label{lem:forb}
  If $y$ is uniformly recurrent then for each open set $U$ in $\orbc(y)$ there is an $m$ such that
  \begin{equation}\label{eq:forb}
  \bigcup_{n=0}^m T^{-n}(U) \supseteq \orbc(y)
  .\end{equation}
\end{lemma}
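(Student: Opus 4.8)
The plan is to reproduce the classical fact that in a minimal system the return times into a fixed open set are uniformly bounded, but to recast the argument so that it avoids the two places where the textbook proof leaves \ls{RCA_0}: extracting a finite subcover from the open cover $\{T^{-n}(U)\}_{n\ge 0}$ (which is Heine--Borel, hence \lp{WKL_0}) and invoking uniform continuity of $T$ on the compact space $\mathcal{X}$ (also \lp{WKL_0}). The idea is to replace global compactness by a \emph{single} syndeticity constant coming from the hypothesis, and to replace uniform continuity by finitely many instances of pointwise continuity, checked at one point at a time. First I would dispose of the trivial case ($U=\emptyset$ makes the statement vacuous, so $U$ is nonempty) and locate a distinguished orbit point inside $U$: since $\orbc(y)$ is by definition the closure of the forward orbit $\{T^n y\}_{n}$, this orbit is dense, so $U$ contains some $q=T^{n_0}y$, and openness of $U$ in $\orbc(y)$ gives a $\delta>0$ with $\{\,w\in\orbc(y)\mid d(w,q)\le \delta\,\}\subseteq U$.

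Next I would produce the uniform bound $m$ from the uniform recurrence of $y$ alone. Applying continuity of the single map $T^{n_0}$ at the point $y$, choose $\eta>0$ so that $d(w,y)<\eta$ forces $d(T^{n_0}w,q)<\delta/2$; then uniform recurrence gives a syndeticity constant $L$ for $\{\,n\mid d(T^n y,y)<\eta\,\}$, and shifting by $n_0$ shows that the return set $\{\,k\mid d(T^k y,q)<\delta/2\,\}$ is syndetic with an explicit constant $m_0$. I then claim $m=m_0$ works. Note this step uses only the uniform recurrence of $y$ and pointwise continuity of one iterate, so it is unproblematic in \ls{RCA_0}; in particular the minimality established in \prettyref{lem:min} is not actually needed.

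It remains to check that every $z\in\orbc(y)$ lands in $U$ within $m_0$ steps. For this I would approximate $z$ by orbit points: using $\Sigma^0_1$-minimization, define $z_i=T^{j_i}y$ with $d(z_i,z)<1/i$, which is available in \ls{RCA_0}. In each window $[j_i,j_i+m_0]$ syndeticity of the return set yields some $k_i$ with $d(T^{k_i}y,q)<\delta/2$, i.e.\ an index $n_i=k_i-j_i\le m_0$ with $d(T^{n_i}z_i,q)<\delta/2$. To conclude I would argue by contradiction: if $T^n z\notin U$ for every $n\le m_0$, then $d(T^n z,q)>\delta$ for each such $n$, and by pointwise continuity of the finitely many maps $T^0,\dots,T^{m_0}$ at the single point $z$ one obtains a common $\delta'>0$ (a finite minimum of the individual continuity moduli) with $d(z_i,z)<\delta'$ forcing $d(T^n z_i,q)>\delta/2$ for all $n\le m_0$, contradicting $d(T^{n_i}z_i,q)<\delta/2$ for large $i$. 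Hence some $n\le m_0$ has $T^n z\in U$, i.e.\ $z\in\bigcup_{n=0}^{m_0}T^{-n}(U)$. The main obstacle is precisely this last paragraph: one must resist the temptation to extract a convergent subsequence or a uniform modulus, and instead package everything so that only a bounded search, a finite minimum of positive reals, and pointwise continuity at each fixed $z$ are used, all of which are legitimate in \ls{RCA_0}.
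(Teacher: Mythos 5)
Your proof is correct, but it takes a genuinely different route from the paper's. The paper's proof is topological and metamathematical: it invokes \prettyref{lem:min} to see that every orbit in $\orbc(y)$ meets $U$, so that the sets $T^{-n}(U)$ form an open cover of the compact set $\orbc(y)$; it extracts a finite subcover using \lp{WKL}, and then eliminates \lp{WKL} by observing that the lemma is a $\Pi^1_1$ sentence and appealing to $\Pi^1_1$-conservativity of \lp{WKL} over \ls{RCA_0} (Corollary~IX.2.6 of \cite{sS09}). Your argument instead stays inside \ls{RCA_0} throughout: you manufacture the bound $m$ explicitly from the syndeticity constant of uniform recurrence (shifted by the first hitting time $n_0$ of the orbit into $U$), and you verify $z\in\bigcup_{n\le m}T^{-n}(U)$ pointwise for each $z\in\orbc(y)$ using only a bounded search, a finite minimum of pointwise continuity moduli for $T^0,\dots,T^m$, and approximation of $z$ by orbit points; in particular you do not need \prettyref{lem:min} at all, since your windowed-return argument reproves a quantitative form of it in passing. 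What the paper's route buys is brevity at the cost of a black-box conservation theorem; what yours buys is an effective, self-contained proof with an explicit value of $m$, and it certifies the containment of the full orbit closure directly rather than only of the orbit $\{T^k y\}_k$ (which is what the paper's reduction to an arithmetical statement actually records). One small reading note: when $U\cap\orbc(y)=\emptyset$ the conclusion is false rather than vacuous, so both proofs tacitly assume $U$ meets $\orbc(y)$; your explicit extraction of $q=T^{n_0}y\in U$ at least makes that hypothesis visible.
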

\begin{proof}
By \prettyref{lem:min} we known that the orbit of each $z\in \orbc(y)$ meets $U$. Therefore,
\[
\bigcup_{n=0}^\infty T^{-n}(U) \supseteq \orbc(y)
.\]
In other words the sets $T^{-n}(U)$ form an open covering of the compact set $\orbc(y)$. Using \lp{WKL} we can find a finite sub-covering and thus an $m$ such that \eqref{eq:forb} holds (see \cite[IV.1]{sS09}). 

Now \eqref{eq:forb} is equivalent to  the arithmetical statement  $\Forall{k} T^k y \in \bigcup_{n=0}^m T^{-n}(U)$. Thus, the statement of the whole lemma is $\Pi^1_1$ and therefore by Corollary~IX.2.6 of \cite{sS09} provable without \lp{WKL}.
\end{proof}

\begin{proof}[Proof of \prettyref{pro:ip}]
  We claim that for each open neighborhood $U$ of $y$ there is a $p$ such that $T^px,T^py \in U$.

  Let $\epsilon$ be such that $B(y,2\epsilon)$ is an open $2\epsilon$-ball around $y$ contained in $U$. By \prettyref{lem:forb} we can find an $m$ with
  \begin{equation}\label{eq:forb2}
  \bigcup_{n=0}^m T^{-n}(B(y,\epsilon)) \supseteq \orbc(y)
  .\end{equation}
  Now by continuity of $T$ we can find a $\delta>0$ such that for all $x',x''$ satisfying $d(x',x'')< \delta$ we have $\Forall{n\in[1,m]} d(T^nx',T^nx'') < \epsilon$. By proximality there is a $r$ such that $d(T^rx,T^ry) < \delta$ and thus $d(T^{n+r}x,T^{n+r}y)<\epsilon$ for all $n \in [1,m]$. By \eqref{eq:forb2} there is an $n \ge m$ with $d(T^{n+r} y,y)< \epsilon$ and thus $d(T^{n+r} x,y)< 2\epsilon$ and in particular $T^{n+r} x,T^{n+r} y \in U$. This finishes the proof of the claim.

  Using this claim we will construct recursively a sequence $(n_i)$ and a sequence of neighborhoods  $(U_i)$ of $y$ with the following properties:
  \begin{itemize}
  \item $U_{i+1} \subseteq U_i$, $T^{n_i} U_{i+1} \subseteq U_i$,
  \item $T^{n_i} x, T^{n_i} y \in U_{i+1}$,
  \item $U_i\subseteq B(y,2^{-i})$.
  \end{itemize}
  For this set $U_1:=B(y,2^{-1})$ and let $n_1$ be such that $T^{n_1}x,T^{n_1}y\in U_1$. Set $U_{i+1} := B(y,2^{-(i+1)}) \cap U_i \cap T^{-n_i} U_i$. This set is by induction hypothesis a neighborhood of~$y$. Let $n_{i+1}$ be such that it satisfies the claim for $U_{i+1}$.

  For a finite sum given by $n_{i_1},\dots,n_{i_k}$ with $i_1< \dots< i_k$ we then have
  \begin{align*}
    T^{n_{i_k}+\dots+n_{i_1}}(x) & \in T^{n_{i_{k-1}}+\dots+n_{i_1}}(U_{i_k}) \\ & \subseteq T^{n_{i_{k-2}}+\dots+n_{i_1}}(U_{i_{k-1}})
     \subseteq \dots \subseteq U_{i_1} \subseteq B(y,2^{-i_1})
  \end{align*}
  or in other words that $\iplim_{n\to \FS{(n_i)}} T^n x = y$.
\end{proof}
This proof is based on Proposition~8.10 of \cite{hF81}.

\begin{remark}
  \prettyref{pro:ip} should be compared with Lemma~5.3 of \cite{BHS87} which states that \lp{IHT} proves (and is in fact equivalent) to the statement that for each sequence $(x_n)_n$ in a compact space there exists an infinite set $N$ with $\iplim_{n\to \FS{N}} x_n$. 
  In the proof of \lp{AET} in \cite{BHS87} the iterated Hindman's theorem (\lp{IHT}) is used in this form (cf.~Theorem~4.13 in \cite{BHS87}). 
  One can show that this property implies that $x,y$ are proximal, see \cite[Lemma~5.2]{BHS87}. However, it is in general not the case that $y$ is uniformly recurrent.
\end{remark}

As immediate consequence we get the following corollary.
\begin{corollary}[\cite{BHS87}, Folklore]\label{cor:aetiht}
  $\ls{RCA_0} \vdash \ls{AET} \IFF \lp{IHT}$.
\end{corollary}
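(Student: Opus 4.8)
The plan is to prove the two directions separately over $\ls{RCA_0}$. For $\lp{IHT}\to\lp{AET}$ I would simply invoke the work of Blass, Hirst, and Simpson recalled above: since $\lp{IHT}$ implies $\lp{ACA_0}$, adjoining $\lp{IHT}$ to $\ls{RCA_0}$ already yields $\ls{ACA_0}+\lp{IHT}$, and the latter system proves $\lp{AET}$ by \cite{BHS87}. No new argument is needed here.

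The interesting direction is $\lp{AET}\to\lp{IHT}$, and for it I would use the characterisation of $\lp{IHT}$ recalled in the remark above (i.e.\ \cite[Lemma~5.3]{BHS87}): over $\ls{RCA_0}$ the principle $\lp{IHT}$ is equivalent to the assertion that every sequence $(x_n)_n$ in a compact metric space admits an infinite set $N$ for which $\iplim_{n\to\FS{N}}x_n$ exists. So it suffices to derive this IP-convergence statement from $\lp{AET}$. First I would turn an arbitrary sequence into a dynamical system: given $(x_n)_n$ in a compact metric space $\mathcal{X}$, I pass to the compact metric space $\mathcal{X}^{\Nat}$ equipped with the product metric together with the shift $T\colon(\omega_k)_k\longmapsto(\omega_{k+1})_k$, and apply $\lp{AET}$ to the point $\omega:=(x_n)_{n\in\Nat}$. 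This furnishes a uniformly recurrent point $y$ proximal to $\omega$, whereupon \prettyref{pro:ip} supplies an increasing sequence $(n_i)_i$ with $\iplim_{n\to\FS{(n_i)}}T^n\omega=y$.

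To finish I would read off the zeroth coordinate. Since $T^n\omega=(x_n,x_{n+1},\dots)$ and convergence in the product metric is coordinatewise, passing to the zeroth coordinate gives $\iplim_{n\to\FS{(n_i)}}x_n=y_0$, so that $N:=\{\,n_i\mid i\in\Nat\,\}$ witnesses the required statement. Combined with the characterisation, this shows $\lp{AET}\to\lp{IHT}$ over $\ls{RCA_0}$.

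I do not expect any serious obstacle, which is precisely why the statement is billed as an immediate corollary: the genuine work is already contained in \prettyref{pro:ip}, and the passage to the shift system is a harmless reduction. The only points demanding attention are the bookkeeping ones of verifying, inside $\ls{RCA_0}$, that $\mathcal{X}^{\Nat}$ with the product metric is a compact metric space and that the shift $T$ is a continuous map, together with the trivial remark that coordinate projection is continuous, so that the IP-limit in the product descends to the zeroth coordinate.
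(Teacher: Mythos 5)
Your proposal is correct and follows essentially the same route as the paper's sketch: one direction by citing the \cite{BHS87} proof of \lp{AET} from $\lp{ACA_0}+\lp{IHT}$ (using that \lp{IHT} implies \lp{ACA_0}), and the other by applying \lp{AET} to a shift system on an $\Nat$-fold product and invoking \prettyref{pro:ip}. The only cosmetic difference is that you route the second direction through the IP-convergence characterisation of \lp{IHT} (Lemma~5.3 of \cite{BHS87}) applied to a general sequence in $\mathcal{X}^\Nat$, whereas the paper codes the given colorings directly as a point of $(2^\Nat)^\Nat$ and reads off the homogeneous IP-set.
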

\begin{proof}[Sketch of proof]
  The left-to-right direction follows from the proof of \lp{AET} in \cite{BHS87}. (Theorem~4.13 in \cite{BHS87} is \lp{IHT}).

  The right-to-left direction follows from \prettyref{pro:ip} by viewing a coloring in $c_i\colon \Nat \longrightarrow 2$ as a point in the dynamical system $(2^\Nat, T)$ where $T$ is the left shift and considering the $\Nat$-fold product of this to be able to deal with all colorings. \lp{AET} together with \prettyref{pro:ip} yields then the desired IP-set. See \prettyref{thm:m} below.
\end{proof}

\begin{definition}
  The extension Auslander-Ellis theorem (\lp{eAET}) is the statement that given 
  \begin{enumerate}
  \item a compact topological dynamical system $(\mathcal{X}_1,T_1)$ with points $x_1,y_1\in\mathcal{X}_1$ satisfying the conclusion of \lp{AET}, and
  \item a second compact topological dynamical system $(\mathcal{X}_2,T_2)$ with a point $x_2\in\mathcal{X}_2$
  \end{enumerate}
  then one can find a point $y_2$ extending the solution to \lp{AET} to the product system $(\mathcal{X}_1\times \mathcal{X}_2, T_1\times T_2)$, i.e.~$\begin{pmatrix} y_1 \\ y_2 \end{pmatrix}$ is uniformly recurrent and $\begin{pmatrix} x_1 \\ x_2 \end{pmatrix},\begin{pmatrix} y_1 \\ y_2 \end{pmatrix}$ are proximal.
\end{definition}

\lp{eAET} follows from an easy adaption of any of the classical proofs of the Auslander-Ellis theorem like the original one by Ellis or Auslander (see \cite{hF81} and \cite{rE60,jA60} for a reference), a different proof by Auslander \cite{jA88}, or the proof using minimal idempotent ultrafilters (see \cite{BH90} and Chaper~19 of \cite{HS12}). 
However, it does not seem to be possible to adapt the proof of Blass, Hirst, Simpson to \lp{eAET} even though is based on the original proof as presented in \cite{hF81}.

We will also use the following consequence of \lp{eAET}.
 Let \lp{eAET'_\mathnormal{n}} be the statement that for each sequence $(t_i)_{i<n}$ of continuous functions, such that $t_i$ is an $i$\nobreakdash-ary  function $t_i\colon \mathcal{X}^i \longrightarrow \mathcal{X}$, there exists a sequence $(y_i)_{i<n}\in \mathcal{X}^n$ with $(y_i)_{i<n}$ uniformly recurrent as point in the $n$-fold product of $(\mathcal{X},T)$ and proximal to the point $(t_0,t_1(y_0),t_2(y_0,y_1),\dots,t_{n-1}(y_0,\dots,y_{n-2}))$. Let \lp{eAET'} be the union of \lp{eAET'_\mathnormal{n}}.

To see that \lp{eAET} implies \lp{eAET'} let $(t_i)_{i<n}$ be given. By \lp{AET} we can find a point $y_0$ which is uniformly recurrent and proximal to $t_0$ in the system $(\mathcal{X},T)$. Then applying \lp{eAET} to the point $x_0:=t_0$, $y_0:=y_0$, $x_0:=t_1(y_0)$, yields a point $y_1$ such that $(y_0,y_1)$ is uniformly recurrent and proximal $(x_0,x_1)$ in the $2$\nobreakdash-fold product of $(\mathcal{X},T)$. Iterating this process yields $(y_i)_{i<n}$ as needed.

\section{Minimal idempotent ultrafilters on countable algebras}\label{sec:mincount}

A countable algebra $\mathcal{A}=\{A_1,A_2,\dots \}$ is a sequence of sets closed under intersections, unions and complement. A downward translation algebra is an algebra $\mathcal{A}$ which is additionally closed under downward translations, i.e.
\[
X\in\mathcal{A} \Rightarrow \Forall{n\in\Nat} \left(X-n \in \mathcal{A}\right)
.\]
(Membership in $\F$ is given by an index set of indexes $A_n$ such that $A_n\in \F$. Thus, this definition makes sense in \ls{RCA_0}.)

A (partial) \emph{non-principal ultrafilter} $\mathcal{F}$ for $\mathcal{A}$ is a subset of $\mathcal{A}$ that satisfies the ultrafilter axioms relativized to $\mathcal{A}$, i.e.
\begin{align*}
   &\Forall{X,Y\in \mathcal{A}} \left(X \in \F \AND X\subseteq Y \IMPL Y\in \F\right) \\
  \AND\, &\Forall{X,Y\in \mathcal{F}} \left((X\cap Y)\in\F\right)  \\
  \AND\, &\Forall{X\in\mathcal{F}} \left(\Forall{n}\Exists{k>n} k\in X\right) \\
  \AND\, & \Forall{X\in \mathcal{A}} \left(X\in\F \OR \overline{X}\in\F\right).
\end{align*}

A \emph{partial idempotent ultrafilter} $\mathcal{F}$, (also \emph{downward translation partial ultrafilter}) is a filter that satisfies the non-principal ultrafilter axioms relativized to $\mathcal{A}$ 
and the following relativized idempotency condition
\[
\Forall{X\in\F} \left(\{\, n\mid X-n\in \F \,\} \in \F\right). \label{eq:idem} \tag{*}
\]
Note that we do not know whether $\{n\in \Nat \mid X-n \in \F\}$ is contained in $\mathcal{A}$. To overcome this problem we actually only check for an subset of $\{n\in \Nat \mid X-n \in \F\}$ that is contained in $\mathcal{A}$.

To keep the quantifier complexity low we will code membership in $\F$ by a monotone arithmetical predicate, i.e., an arithmetical formula $\phi$ such that i.e.\ $X\in\F:\equiv \phi(X)$ and $\phi(X) \AND X'\supseteq X \IMPL \phi(X')$.  (This definition is made relative to $\ls{ACA_0}$, since otherwise the membership property of $\F$ is not decidable.)
A partial non-principal ultrafilter in the sense of the above definition is then given by $\{\, X\in\mathcal{A} \mid \phi(X) \,\}$.

In \cite{jH04} Hirst considered a weaker form  of idempotent ultrafilters so called \emph{almost downward translation invariant ultrafilters} where \eqref{eq:idem} is replaced by the following
\begin{equation}\label{eq:hirst}
\Forall{X\in \F} \Exists{n} \left(X-n \in \F\right)
.\end{equation}
Since $X\cap \{\, n\mid X-n\in \F \,\}\in \F$ for a partial idempotent ultrafilter and each set in $\F$ is infinite and therefore nonempty, this condition is satisfied by any partial idempotent ultrafilter. 

A (partial) \emph{minimal (idempotent) partial ultrafilter} for $\mathcal{A}$ is an (idempotent) partial ultrafilter for $\mathcal{A}$ which additionally satisfies
\begin{equation}\label{eq:min}\tag{\dag}
\Forall{X\in \F} \{\, n \mid X-n\in \F\,\}\text{ is syndetic}.
\end{equation}

\begin{theorem}\label{thm:npu}
  Over \ls{RCA_0} the following statements are equivalent.
  \begin{enumerate}[label=(\roman*)]
  \item\label{enum:npu:1} For every countable algebra $\mathcal{A}$ there exists a partial non-principal ultrafilter.
  \item\label{enum:npu:2} For every countable algebra $\mathcal{A}$ there exists a partial minimal ultrafilter.
  \item\label{enum:npu:3} \ls{ACA_0}.
  \end{enumerate}
\end{theorem}
\begin{proof}
  It is clear that it suffices to show $\ref{enum:npu:1} \Rightarrow \ref{enum:npu:3}$ and $\ref{enum:npu:3} \Rightarrow \ref{enum:npu:2}$.

  \noindent
  $\ref{enum:npu:1} \Rightarrow \ref{enum:npu:3}$: We show that $\ref{enum:npu:1}$ proves $\Pi^0_1$-comprehension. Let $\phi_0(n,x)$ be a quantifier-free formula. 
  Let $\mathcal{A}$ be the algebra given by the sets $X_n:= \{ x \mid \Forall{x' \le x} \phi_0(n,x') \}$. Let $\mathcal{F}$ be a partial non-principal ultrafilter for $\mathcal{A}$. We have for any $n$ that
  \begin{align*}
    \Forall{x} \phi_0(n,x) &\IFF X_n \text{ is infinite} \\
    & \IFF X_n\in \F
  \end{align*}
  Thus, the characteristic function of $X_n\in \F$ is also a comprehension function for $\Forall{x} \phi_0(n,x)$.

  \noindent
  $\ref{enum:npu:3} \Rightarrow \ref{enum:npu:2}$:
  Let $\mathcal{A}=\{A_1,A_2,\dots\}$ be given. We will interpret each set as a point in the Cantor-space $2^{\Nat}$ given by the characteristic function 
  \[
  \chi_{A_i}(n) =
  \begin{cases}
    0 & \text{if $n\in A_i$}, \\
    1 & \text{if $n\notin A_i$.}
  \end{cases}
  \]
  We will use the shift $Tu(n)\mapsto u(n+1)$ as transformation. With this $(2^\Nat, T)$ becomes a compact topological dynamical system.
  To treat all sets simultaneously we will use the $\Nat$-fold product of this system and arrive at the system $({(2^\Nat)}^\Nat, T^\Nat)$.
  Let 
  \begin{equation}\label{eq:pointx}
    x:=
    \begin{pmatrix}
      \chi_{A_1} \\
      \chi_{A_2} \\
      \vdots 
    \end{pmatrix} \in {(2^\Nat)}^\Nat
  \end{equation}
  be the point coding all sets in the algebra.

  By Lemma 5.5 of \cite{BHS87} and \prettyref{lem:min} we can find in \ls{ACA_0} a $y= \begin{psmallmatrix}
    y_1 \\
    y_2 \\[-1ex]
    \vdots 
  \end{psmallmatrix}\in \orbc(x)$ such that $\orbc(y)$ is minimal. We claim that the filter $\F$ given by 
  \[
  A_n \in \F \quad \text{if{f}} \quad y_n(0) = 0
  \]
  is a minimal partial ultrafilter.
  To show that it is a partial ultrafilter note that, let $(n_j)_{j\in\Nat}$ be a strictly increasing sequence such that  $\lim_{j\to\infty} T^{n_j}x = y$. In particular, for each $A_i\in \mathcal{A}$ the limit $\lim_{j\to\infty} T^{n_j} \chi_{A_i}$ exists. Now either $A_i$ or $\overline{A_i}$ is in $\mathcal{F}$, since either $\lim_{j\to\infty} T^{n_j} \chi_{A_i}=0$ or $\lim_{j\to\infty} T^{n_j} \chi_{\overline{A_i}}=0$. In the same way one can show that $\F$ is closed under taking supersets and finite intersections.

  To show that the filter $\F$ is minimal, i.e., it satisfies \eqref{eq:min}, consider an $A_i\in\mathcal{F}$. Since $A_i\in \F$ we have that $y_i(0)=0$. Let $\epsilon>0$ be small enough such that such that for two points $x'=  \begin{psmallmatrix}
    x'_1 \\
    x'_2 \\[-1ex]
    \vdots 
  \end{psmallmatrix}, x'' = 
   \begin{psmallmatrix}
    x''_1 \\
    x''_2 \\[-1ex]
    \vdots 
  \end{psmallmatrix} \in {(2^\Nat)}^\Nat$ we have that $d(x',x'')<\epsilon$ implies $x'_i(0)=x''_i(0)$ for that given $i$. Since the orbit closure of $y$ is minimal, repeated applications of \prettyref{lem:forb} to the open set $U:=\{ x \mid d(x,y) < \epsilon \}$ give that
  \begin{equation}\label{eq:psyn}
  \{ n'\in\Nat \mid (T^{n'} y_i)(0) = 0 \}\text{ is syndetic.}
  \end{equation}
  Since $T$ is continuous we have that
  \begin{align*}
  \lim_{j\to\infty} (T^{n_j}\chi_{A_i-n'}) &= \lim_{j\to\infty} T^{n'}( T^{n_j}\chi_{A_i}) 
  \\ & = T^{n'}\left(\lim_{j\to \infty} T^{n_j} \chi_{A_i}\right)
  \\ & = T^{n'} y_i .
  \end{align*}
  Combining this with \eqref{eq:psyn} we get that
  \[
  \{ n'\in\Nat \mid A_i-n' \in\F \}\text{ is syndetic}
  \]
  and thus \eqref{eq:min}.
\end{proof}

\begin{lemma}\label{lem:aetmin}
  The Auslander-Ellis theorem (\lp{AET}) proves that for each countable downward translation algebra $\mathcal{A}$ there exists a partial minimal idempotent ultrafilter.
\end{lemma}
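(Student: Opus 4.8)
The plan is to realize the entire algebra $\mathcal{A}=\{A_1,A_2,\dots\}$ as a single point of a product dynamical system, apply \lp{AET} together with \prettyref{pro:ip} to that point, and read off $\F$ from the resulting IP-set. Form the compact metric space $\mathcal{X}:=(2^\Nat)^\Nat\cong 2^{\Nat\times\Nat}$ with the continuous shift $T$ given by $(Tz)(i,m)=z(i,m+1)$, and take $x:=(\chi_{A_i})_i$, so that $(T^n x)(i,m)=1$ exactly when $m+n\in A_i$; in particular $(T^n x)(i,0)=1$ iff $n\in A_i$. Applying \lp{AET} to $(\mathcal{X},T)$ and $x$ yields a uniformly recurrent point $y$ proximal to $x$, and \prettyref{pro:ip} then supplies a strictly increasing sequence $(n_i)$ with $\iplim_{n\to\FS{(n_i)}}T^n x=y$.

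I would then define $\F$ from this IP-set by declaring $X\in\F$ iff there is a $k$ with $\FS{(n_i)_{i\ge k}}\subseteq X$; equivalently, $n\in X$ for all IP-large $n$. The non-principal ultrafilter axioms are then routine: supersets and finite intersections are handled by passing to a common tail index $k$ (the tail finite-sum sets are nested); each $X\in\F$ is infinite because the IP-set is; and the dichotomy $X\in\F\OR\overline X\in\F$ holds because $\iplim$ forces the truth value of $n\in X$ to be eventually constant along the IP-set, so exactly one of $X,\overline X$ contains an entire tail $\FS{(n_i)_{i\ge k}}$. The defining predicate $\phi(X):\equiv\exists k\,\FS{(n_i)_{i\ge k}}\subseteq X$ is arithmetical, and since \lp{AET} implies \lp{IHT} and hence \ls{ACA_0}, it determines a genuine set.

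The idempotency condition \eqref{eq:idem} is where the finite-sum structure does the real work. Given $X\in\F$ with $\FS{(n_i)_{i\ge k}}\subseteq X$, I would show $\FS{(n_i)_{i\ge k}}\subseteq\{\,n\mid X-n\in\F\,\}$, which yields $\{\,n\mid X-n\in\F\,\}\in\F$ with the same witness $k$. Indeed, for $n=n_{i_1}+\dots+n_{i_j}$ with $k\le i_1<\dots<i_j$, every $m\in\FS{(n_i)_{i\ge i_j+1}}$ has index support disjoint from and above $\{i_1,\dots,i_j\}$, so $n+m\in\FS{(n_i)_{i\ge k}}\subseteq X$; hence $\FS{(n_i)_{i\ge i_j+1}}\subseteq X-n$ and $X-n\in\F$.

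For the minimality condition \eqref{eq:min}, fix $X=A_i\in\F$. As $\mathcal{A}$ is closed under downward translation, each $X-n$ lies in $\mathcal{A}$, and unwinding the IP-limit at the coordinate $(i,n)$ shows that $X-n\in\F$ iff $(T^m x)(i,n)=1$ for all IP-large $m$, i.e.\ iff $y(i,n)=1$. Thus $\{\,n\mid X-n\in\F\,\}$ is precisely the $i$-th coordinate $\{\,n\mid y(i,n)=1\,\}$ of $y$. Now $X\in\F$ forces $y(i,0)=1$, so $y$ lies in the clopen set $U:=\{\,z\mid z(i,0)=1\,\}$; since $T^n y\in U$ iff $y(i,n)=1$, uniform recurrence of $y$ makes this coordinate syndetic, which is exactly what \eqref{eq:min} demands. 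I expect the crux to be this double role of the single object produced by \lp{AET}: idempotency needs the finite-sum (IP) structure while minimality needs uniform recurrence, and the two are reconciled only because \prettyref{pro:ip} delivers an IP-set whose limit is the uniformly recurrent point — uniform recurrence that a bare \lp{IHT}/Hindman-style argument would not guarantee.
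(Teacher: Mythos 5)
Your proposal is correct and follows essentially the same route as the paper: the same product system $\bigl((2^\Nat)^\Nat,T^\Nat\bigr)$ with the point coding $\mathcal{A}$, the same appeal to \lp{AET} plus \prettyref{pro:ip}, the same tail-of-finite-sums definition of $\F$, and the same uniform-recurrence argument for \eqref{eq:min}. The only difference is cosmetic: you prove the idempotency condition \eqref{eq:idem} directly by the standard disjoint-support finite-sums computation, where the paper simply cites Lemma~13 of \cite{aKidem}.
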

\begin{proof}
  Let $\mathcal{A}=\{A_1,A_2,\dots\}$. We consider the point coding all sets of $\mathcal{A}$ as in \eqref{eq:pointx}.
  
  By \lp{AET} there exists a point $y = \begin{psmallmatrix}
    y_1 \\
    y_2 \\[-1ex]
    \vdots 
  \end{psmallmatrix}$ which is uniformly recurrent and proximal to $x$.
  By \prettyref{pro:ip} there exists an increasing sequence $(n_j)_j$ such that 
  \[
  \iplim_{n\to \FS{(n_j)_j}} T^n \chi_{A_i} = y_i
  .\]
  
  We set 
  \[
  \F := \left\{\, X \subseteq \Nat \sizeMid \iplim_{{n\to \FS{(n_j)_j}}} (T^n \chi_X) \text{ exists and } \iplim_{{n\to \FS{(n_j)_j}}} (T^n \chi_X)(0) = 0\,\right\}
  \]
  or in other words
  \[
  \F = \left\{\, X \subseteq \Nat \sizeMid \Exists{k} \FS{(n_j)_{j=k}^\infty} \subseteq X \,\right\}
  .\]
  It is clear that $\F$ can be defined by a monotone, arithmetical formula and it is straightforward to check that $\F$ forms a filter containing only infinite sets, i.e.\ it is closed under finite intersections and taking supersets.

  The filter $\F$ satisfies \eqref{eq:idem} by Lemma 3.3 in \cite{aK15}. It satisfies \eqref{eq:min} by the same argument as in the proof of \prettyref{thm:npu}. Thus, $\F$ is a minimal idempotent partial ultrafilter for $\mathcal{A}$. 
\end{proof}

\begin{theorem}\label{thm:m}
  Over \ls{ACA_0} the following statements are equivalent.
  \begin{enumerate}
  \item\label{enum:m:1} The Auslander-Ellis theorem \lp{AET}.
  \item\label{enum:m:2} The iterated Hindman's theorem \lp{IHT}.
  \item\label{enum:m:3} For every countable downward translation algebra there exists a partial minimal idempotent ultrafilter.
  \item\label{enum:m:4} For every countable downward translation algebra there exists a partial idempotent ultrafilter.
  \item\label{enum:m:5} For every countable downward translation algebra there exists an almost downward translation invariant ultrafilter (in the sense of Hirst \cite{jH04}).
  \end{enumerate}
\end{theorem}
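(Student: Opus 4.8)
The plan is to establish a cycle of implications among the five statements, routing everything through the equivalences and constructions already available in the excerpt. The implications $\eqref{enum:m:1}\Leftrightarrow\eqref{enum:m:2}$ are free: this is exactly \prettyref{cor:aetiht}, which gives $\ls{RCA_0}\vdash\lp{AET}\IFF\lp{IHT}$ and hence holds a fortiori over \ls{ACA_0}. The implication $\eqref{enum:m:1}\Rightarrow\eqref{enum:m:3}$ is also already done, namely \prettyref{lem:aetmin}, which shows that \lp{AET} produces a partial minimal idempotent ultrafilter on any countable downward translation algebra. So the genuine work is to close the cycle by proving the downward chain $\eqref{enum:m:3}\Rightarrow\eqref{enum:m:4}\Rightarrow\eqref{enum:m:5}$ and then the return implication $\eqref{enum:m:5}\Rightarrow\eqref{enum:m:1}$ (equivalently $\eqref{enum:m:5}\Rightarrow\eqref{enum:m:2}$).

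The two forward weakenings are essentially immediate from the definitions in \prettyref{sec:mincount}. For $\eqref{enum:m:3}\Rightarrow\eqref{enum:m:4}$, a partial minimal idempotent ultrafilter is by definition already a partial idempotent ultrafilter, since condition \eqref{eq:min} is an \emph{additional} requirement on top of \eqref{eq:idem}; dropping it changes nothing about the existence claim. For $\eqref{enum:m:4}\Rightarrow\eqref{enum:m:5}$, I would invoke the observation made right after the statement of \eqref{eq:hirst} in the text: for a partial idempotent ultrafilter one has $X\cap\{\,n\mid X-n\in\F\,\}\in\F$, and since every set in $\F$ is infinite hence nonempty, there is some $n$ with $X-n\in\F$, which is exactly Hirst's almost-downward-translation-invariance condition \eqref{eq:hirst}. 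Thus any partial idempotent ultrafilter is in particular an almost downward translation invariant ultrafilter, so existence of the former yields existence of the latter on every countable downward translation algebra.

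The substantive step — and the one I expect to be the main obstacle — is the return implication $\eqref{enum:m:5}\Rightarrow\eqref{enum:m:1}$, i.e.\ showing that the weakest ultrafilter hypothesis (Hirst's almost downward translation invariance) already implies \lp{AET}, equivalently \lp{IHT}. The strategy is to reduce to \lp{IHT}: given a sequence of colorings $c_k\colon\Nat\longrightarrow 2$, I would build a suitable countable downward translation algebra $\mathcal{A}$ containing all the color classes $c_k^{-1}(0)$, $c_k^{-1}(1)$ together with their downward translates and Boolean combinations (and I would need to check, over \ls{ACA_0}, that the downward-translation closure of a countable family is again a countable algebra, so that the hypothesis applies). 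Applying \eqref{enum:m:5} gives an almost downward translation invariant ultrafilter $\F$ for $\mathcal{A}$; the task is then to extract from $\F$ an infinite increasing sequence $(x_i)_i$ such that each tail $\FS{(x_i)_{i=k}^\infty}$ is $c_k$-homogeneous. This is where Hirst's analysis in \cite{jH04} is the natural tool, and I expect the construction of the sequence to proceed by the usual recursion: at each stage one chooses a large element of a member of $\F$, uses the almost-invariance to find a translate still in $\F$, and thereby keeps all the relevant finite sums inside the appropriate monochromatic classes. The delicate points are bookkeeping the countably many colorings simultaneously and verifying that the recursion can be carried out with only arithmetical comprehension; modulo those verifications the implication should follow, closing the cycle and establishing the equivalence of all five statements over \ls{ACA_0}.
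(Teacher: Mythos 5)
Your proposal is correct and follows essentially the same route as the paper: the cycle $\eqref{enum:m:1}\Rightarrow\eqref{enum:m:3}\Rightarrow\eqref{enum:m:4}\Rightarrow\eqref{enum:m:5}\Rightarrow\eqref{enum:m:2}\Rightarrow\eqref{enum:m:1}$, with \prettyref{lem:aetmin} for the first step, the definitional weakenings for the next two, and Hirst's result from \cite{jH04} for $\eqref{enum:m:5}\Rightarrow\eqref{enum:m:2}$. The paper simply cites \cite{jH04} for that last implication rather than re-deriving it, so the recursion you sketch there is exactly the content of the cited result and need not be reproved.
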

\begin{proof}
  $\ref{enum:m:2}\Rightarrow \ref{enum:m:1}$ follows from \cite{BHS87}. $\ref{enum:m:5}\Rightarrow \ref{enum:m:2}$ follows from \cite{jH04}. $\ref{enum:m:3}\Rightarrow \ref{enum:m:4}\Rightarrow \ref{enum:m:5}$ is clear and $\ref{enum:m:1}\Rightarrow \ref{enum:m:3}$ is \prettyref{lem:aetmin}.
\end{proof}

\section{Minimal idempotent ultrafilters in a higher-order setting}\label{sec:minhigh}

In this section we will work in the higher-order systems \ls{RCA_0^\omega}, \ls{ACA_0^\omega} corresponding to \ls{RCA_0} and \ls{ACA_0}. We refer the reader to \cite{uK05b} for an introduction to these systems and assume that he is familiar with the treatment of ultrafilters in these systems in \cite{aK12b,aK15}.

The statement that a minimal idempotent ultrafilter exists can be formulated in \ls{RCA_0^\omega} in the following way.
\[
  \lpf{\Umin}\colon\left\{
  \begin{aligned}
    \Exists{\U^2} \big(\ &\Forall{X^1} \left(X\in\U \OR \overline{X}\in\U\right) \\
    \AND\, &\Forall{X^1,Y^1} \left(X \cap Y \in \U \IMPL Y\in \U\right) \\
    \AND\, &\Forall{X^1,Y^1} \left(X,Y\in \U \IMPL (X\cap Y)\in\U\right)  \\
    \AND\, &\Forall{X^1} \left(X\in\U \IMPL \Forall{n}\Exists{k>n} (k\in X)\right) \\
    \AND\, &\Forall{X^1} \left(X\in\U \IMPL \left\{\, n\in\Nat \mid X - n \in \U \,\right\} \in \U \right) \\
    \AND\, &\Forall{X^1} \left(\left\{\, n\in\Nat \mid X - n \in \U \,\right\}\text{ is syndetic}\right) \\
    \AND\, &\Forall{X^1} \left(\U(X) =_0 \sg(\U(X)) =_0 \U(\lambda n. \sg(X(n)))\right)\big)
  \end{aligned}
  \right.
\]
The first four lines state that $\U$ is a non-principal ultrafilter. The fifth line indicates that $\U$ is idempotent and the sixth that it is minimal.
The last line states that $\U$ respects coding of sets as characteristics functions.

In \cite{aK15} we showed that the existence of idempotent ultrafilters \lpf{\Uidem} is $\Pi^1_2$\nobreakdash-\hspace{0pt}conservative over $\ls{RCA_0^\omega} + \lp{IHT}$. This theorem is proved by replacing the ultrafilter occurring in a proof of a $\Pi^1_2$ statement by a finite sequence of partial idempotent ultrafilters\footnote{In \cite{aK15} the filters are called downward translation partial ultrafilter} $(\mathcal{F}_i)_{i<n}$ for an increasing sequence of countable algebras $(\mathcal{A}_i)_{i<n}$ such that each $\F_{i+1}$ refines $\F_i$, in the sense that
\[
  \Forall{i} \Forall{j<i} \left(\mathcal{F}_i \cap \mathcal{A}_j = \mathcal{F}_j\right)
.\]

We indicate here how to change the construction of a downward translation partial ultrafilter $\F$ such that additionally for each set $X\in \F$ we have that 
\begin{equation}\notag
\left\{\, n\in\Nat \mid X - n \in \U \,\right\}\text{ is syndetic},
\end{equation}
and thus that they can be used to replace \emph{minimal} idempotent ultrafilters. 

Like in the proof of \prettyref{lem:aetmin} and in \cite{aK15} the partial idempotent ultrafilters will be of the following form.
\[
\F((n_i)_i) := \{\, X \mid \Exists{m} \FS{(n_i)_{i=m}^\infty}\subseteq X\,\}
.\]
Note that the filter here is not given by a formula anymore but by a higher-order object. To construct this object from the sequence $(n_i)$ one in general needs $\mu$.

The construction in the proof of \prettyref{lem:aetmin} can be summed up in the following way. Let $\mathcal{A}$ be an countable algebra and $x\in {2^\Nat}^\Nat$ the point in the system $\left({(2^\Nat)}^\Nat, T^\Nat\right)$ corresponding to $\mathcal{A}$ via \eqref{eq:pointx}. Then any uniformly recurrent point $y$ proximal to $x$ gives rise of partial minimal idempotent ultrafilter of the form $\F((n_i)_i)$.
The following lemma give a reversal to the construction.

We will write $\ulim{\F}{}$ for the limit along a filter $\F$, i.e.~$\ulim{\F}{n} x_n = x$ if for all $\epsilon$ the set $\{ n \mid d(x_n,x)< \epsilon \}$ is contained in $\F$.

\begin{lemma}\label{lem:aetrev}
  Let $\mathcal{A}=\{A_1,A_2,\dots\}$ be an countable algebra and let $\F$ be a partial minimal idempotent ultrafilter.

  For $x$ as in \eqref{eq:pointx} we have that
  \[
  \ulim{\F}{n} {(T^\Nat)}^n(x) = 
  \begin{pmatrix}
    y_1 \\ y_2 \\ \vdots
  \end{pmatrix}=: y \text{ exists}
  \]
  and that $y$ is uniformly recurrent and proximal to $x$ and for all $i$ we have that $A_i\in \F$ if{f} $y_i(0)=0$.
\end{lemma}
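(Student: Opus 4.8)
The plan is to read off the coordinates of the candidate limit directly from $\F$, and then to deduce the three conclusions — existence, proximality, and uniform recurrence — from the three defining properties of $\F$ (partial ultrafilter, idempotency, minimality), in that order. Throughout I write $T$ for $T^\Nat$ and $y_i$ for the $i$-th component of $y$.

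First I would settle existence together with the final ``iff''. Since the product metric makes $\epsilon$-closeness on ${(2^\Nat)}^\Nat$ depend on only finitely many coordinates, it suffices to produce each coordinate of the limit. For a pair $(i,k)$ the value $\bigl((T^\Nat)^n(x)\bigr)_i(k) = \chi_{A_i}(n+k)$ equals $0$ exactly when $n \in A_i - k$, and $A_i - k \in \mathcal{A}$ since $\mathcal{A}$ is closed under downward translation (as in \prettyref{lem:aetmin}); hence the partial ultrafilter property decides $A_i - k$, so $\ulim{\F}{n}\bigl((T^\Nat)^n(x)\bigr)_i(k)$ exists and is $0$ iff $A_i - k \in \F$. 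Defining $y_i(k)$ to be this value yields the limit $y$, and the case $k=0$ is precisely $A_i \in \F \iff y_i(0)=0$.

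The technical heart is a single computation identifying, for each pair $(i,k)$, the \emph{return set}
\[
\left\{\, n \mid \bigl((T^\Nat)^n y\bigr)_i(k) = y_i(k) \,\right\} = \left\{\, n \mid Z_{i,k} - n \in \F \,\right\},
\]
where $Z_{i,k} := A_i - k$ if $y_i(k)=0$ and $Z_{i,k} := \overline{A_i} - k$ if $y_i(k)=1$; in either case $Z_{i,k} \in \F$ by the partial ultrafilter property (using $\overline{A_i - k} = \overline{A_i} - k$). Proximality then follows from idempotency: by \eqref{eq:idem} each set $\{n \mid Z_{i,k} - n \in \F\}$ lies in $\F$, so a finite intersection over the coordinates controlling $\epsilon/2$-closeness shows $\{n \mid d((T^\Nat)^n y, y) < \epsilon/2\} \in \F$; intersecting this with $\{n \mid d((T^\Nat)^n x, y) < \epsilon/2\} \in \F$ produces an $\F$-large, hence infinite, set of $n$ with $d((T^\Nat)^n x, (T^\Nat)^n y) < \epsilon$ by the triangle inequality.

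Uniform recurrence is where I expect the only real difficulty, and it is exactly here that minimality rather than idempotency is needed. The naive argument — that each return set is syndetic by \eqref{eq:min} and then intersecting over the finitely many controlling coordinates — fails, since syndetic sets are not closed under intersection. The fix is to intersect inside $\F$ first: set $X := \bigcap Z_{i,k}$ over the controlling pairs, so that $X \in \F$ as a finite intersection of members of $\F$, and apply \eqref{eq:min} to the \emph{single} set $X$ to conclude that $\{n \mid X - n \in \F\}$ is syndetic. For such an $n$, upward closure applied to the inclusion $X - n \subseteq Z_{i,k} - n$ gives $Z_{i,k} - n \in \F$, hence $\bigl((T^\Nat)^n y\bigr)_i(k) = y_i(k)$ for every controlling pair and thus $d((T^\Nat)^n y, y) < \epsilon$. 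Therefore $\{n \mid d((T^\Nat)^n y, y) < \epsilon\}$ contains a syndetic set and is itself syndetic, which is the uniform recurrence of $y$ and completes the proof.
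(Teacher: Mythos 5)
Your proof is correct, and its overall skeleton matches the paper's: decide $A_i-k$ with the partial ultrafilter property to get existence and the final equivalence, compute the return sets, then use \eqref{eq:idem} for proximality and \eqref{eq:min} for uniform recurrence. Two sub-arguments genuinely differ. For proximality the paper sets $X_\epsilon=\{n\mid d(T^nx,y)<\epsilon\}$, forms $X'=X_\epsilon\cap\{n\mid X_\epsilon-n\in\F\}$ using idempotency, and for each $m\in X'$ invokes continuity of $T$ to produce a $\delta$ and an $n\in X_\delta$ with $m+n\in X_\epsilon$, concluding $d(T^mx,T^my)<3\epsilon$ for every $m$ in the infinite set $X'$; your version instead bounds $d(T^nx,T^ny)\le d(T^nx,y)+d(y,T^ny)$ directly, using idempotency only to see that $\{n\mid d(T^ny,y)<\epsilon/2\}$ contains the $\F$-set $\bigcap\{n\mid Z_{i,k}-n\in\F\}$. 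Your route is shorter and dispenses with the continuity step (harmless here, since $T$ is the shift and everything is explicit). For uniform recurrence the paper reduces, via the subbase $O_{i,k,b}$, to showing that each single return set $\{n\mid (T^ny)_i(k)=y_i(k)\}$ is syndetic; as you observe, that reduction is too quick, because uniform recurrence requires the simultaneous return set for a basic (finite-intersection) neighborhood to be syndetic and syndetic sets are not closed under intersection. Your repair --- intersect the $Z_{i,k}$ inside $\F$ first, apply \eqref{eq:min} to the single set $X=\bigcap Z_{i,k}$, and transfer the conclusion through upward closure via $X-n\subseteq Z_{i,k}-n$ --- is exactly the right way to close that gap and makes your write-up the more careful of the two. (Both arguments tacitly need $\mathcal{A}$ to be a downward translation algebra so that $A_i-k\in\mathcal{A}$ is decided by $\F$; you state this explicitly, the paper leaves it implicit.)
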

\begin{proof}
  For each set $X$ we have that 
  \[
  \ulim{\F}{n} {(T^\Nat)}^n(\chi_X)\text{ exists and equals $0$ } \quad\text{if{f}}\quad X\in \F
  .\]
  Thus $\ulim{\F}{n} {(T^\Nat)}^n(x)=y$ exists and $A_i\in \F$ if{f} $y_i(0)=0$.

  Since the sets $O_{i,k,b} := \big\{\, x \in {(2^\Nat)}^\Nat \,\big\vert\, x_i(k)=b \,\big\}$ form a subbase of the topology of ${(2^\Nat)}^\Nat$ it suffices to show that
  \[
  \{\, n \mid T^{n} y_i(k) = y_i(k) \,\}\text{ is syndetic for each $i,k$},
  \]
  to obtain that $y$ is uniformly recurrent.

  This follows from 
  \begin{align*}
    (T^{n} y_i)(k)  & = (T^{k+n} y_i)(0) = \left(\ulim{\F}{n'} {T}^{k+n+n'}(\chi_{A_i})\right)(0) \\
    & =
    \begin{cases}
      0 & \text{if } (A_i - k) - n \in \F, \\
      1 & \text{if } \overline{(A_i - k) - n} \in \F.
    \end{cases}
  \end{align*}
  For simplicity we assume that $(A_i - k) \in \F$ then by the minimality of $\F$  we have $T^{n} y_i(k)=0=y_i(k)$ for a syndetic sets of $n$. The case of $(A_i - k) \notin \F$ is similar.
  
  Now we show that $x,y$ are proximal. Let $X_{\epsilon} := \left\{ n \sizeMid  d(T^n(x),y) < \epsilon \right\}$. By assumption $X_\epsilon\in \F$ for each $\epsilon$. 
  Fix $\epsilon>0$. By the properties of $\F$ we have 
  \[
  X' := X_\epsilon \cap \left\{ n \sizeMid X_\epsilon - n \in \F \right\}\in \F.
  \]
  Let $m$ be an arbitrary element of $X'$ and choose $\delta>0$ such that $d(z,y)<\delta$ implies $d(T^mz,T^my) < \epsilon$.
  Now let $n\in X_\delta \cap X' \in \F$ be such that $n>m$. Then $d(T^nx,y)<\delta$; thus $d(T^{m+n}x,T^my) < \epsilon$ and $m+n\in X_\epsilon$. We get
  \[
  d(T^m x, T^m y) \le d(T^{m}x, y) + d(y,T^{m+n}x) + d(T^{m+n}x,T^{m}y) < 3\epsilon
  .\qedhere
  \]
\end{proof}

In particular, the above construction shows that there is a one-to-one correspondence between points $y$, that are uniformly recurrent and proximal to $x$ as in~\eqref{eq:pointx}, and partial minimal idempotent ultrafilters. Moreover, this construction shows that each partial idempotent ultrafilter $\F$ on an algebra $\mathcal{A}$ is equal to an partial idempotent ultrafilter of the form $\F((n_i))$ in the sense that $\F \cap \mathcal{A} =  \F((n_i)) \cap \mathcal{A}$.

Using this one-to-one correspondence one obtains the following lemma.
\begin{lemma}\label{lem:15}
  $\ls{RCA_0^\omega} + \lpf{\mu} + \lp{eAET}$ proves the following.
  Given a countable algebra $\mathcal{A}=\{A_1,A_2,\dots\}$ and a partial minimal idempotent ultrafilter $\F$. Then for each countable algebra $\mathcal{A'}$ extending $\mathcal{A}$ there exists a sequence $(n_i)_i$ such that $\F((n_i)_i)$ is a partial minimal idempotent ultrafilter and $\F((n_i)_i)\cap \mathcal{A} = \F$.
\end{lemma}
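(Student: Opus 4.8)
The plan is to exploit the one-to-one correspondence of \prettyref{lem:aetrev} between partial minimal idempotent ultrafilters and uniformly recurrent proximal points: I would first turn the given $\F$ into such a point, feed it into \lp{eAET} as the \emph{prescribed} solution on the $\mathcal{A}$-block, and then read off the extended point as an ultrafilter of the form $\F((n_i)_i)$ by means of \prettyref{pro:ip} and the forward construction of \prettyref{lem:aetmin}. The reason \lp{eAET} is used rather than plain \lp{AET} is precisely that \lp{eAET} keeps the prescribed block $y_1$ unchanged, which is what will force agreement with $\F$ on $\mathcal{A}$.

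Concretely, let $x_1\in{(2^\Nat)}^\Nat$ be the point of \eqref{eq:pointx} coding $\mathcal{A}$, viewed in $\mathcal{X}_1:=\left({(2^\Nat)}^\Nat,T^\Nat\right)$. Applying \prettyref{lem:aetrev} to $\F$ gives a point $y_1=\ulim{\F}{n}{(T^\Nat)}^n(x_1)$ that is uniformly recurrent, proximal to $x_1$, and satisfies $A_i\in\F$ if{f} $(y_1)_i(0)=0$; thus $(\mathcal{X}_1,x_1,y_1)$ meets hypothesis~(1) of \lp{eAET}. For the second system I would take $\mathcal{X}_2:=\left({(2^\Nat)}^\Nat,T^\Nat\right)$ with the point $x_2$ coding $\mathcal{A'}$ via \eqref{eq:pointx} (we may assume $\mathcal{A'}$ is closed under downward translations, enlarging it to its closure if necessary, which is harmless). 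Then \lp{eAET} produces $y_2$ such that $\binom{y_1}{y_2}$ is uniformly recurrent and proximal to $\binom{x_1}{x_2}$ in $\mathcal{X}_1\times\mathcal{X}_2$.

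Since a finite product of $\Nat$-fold products of $(2^\Nat,T)$ is again of this form and the block $x_2$ already codes all of $\mathcal{A'}$, the point $\binom{x_1}{x_2}$ codes an algebra containing $\mathcal{A'}$. I would then apply \prettyref{pro:ip} to $\binom{x_1}{x_2},\binom{y_1}{y_2}$ to obtain an increasing sequence $(n_i)_i$ with $\iplim_{n\to\FS{(n_i)}}{(T^\Nat\times T^\Nat)}^n\binom{x_1}{x_2}=\binom{y_1}{y_2}$, and form the higher-order object $\F((n_i)_i)$ (this last step is where $\lpf{\mu}$ enters). Replaying the argument of \prettyref{lem:aetmin} verbatim, with $\binom{y_1}{y_2}$ in the role of the point supplied there by \lp{AET}, shows that $\F((n_i)_i)$ is a partial minimal idempotent ultrafilter for $\mathcal{A'}$: every set of $\mathcal{A'}$ is decided because its characteristic function is a coordinate of $\binom{x_1}{x_2}$, idempotency is Lemma~13 of \cite{aKidem}, and minimality follows from the uniform recurrence of $\binom{y_1}{y_2}$ via the continuity identity $\iplim_{n\to\FS{(n_i)}}T^n\chi_{A_i-n'}=T^{n'}\bigl(\iplim_{n\to\FS{(n_i)}}T^n\chi_{A_i}\bigr)$ used in \prettyref{lem:aetmin}.

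Finally, to check $\F((n_i)_i)\cap\mathcal{A}=\F$: for $A_i\in\mathcal{A}$ we have $A_i\in\F((n_i)_i)$ if{f} $\iplim_{n\to\FS{(n_i)}}(T^n\chi_{A_i})(0)=0$, and since $\chi_{A_i}$ is the $i$-th coordinate of the first block, whose IP-limit is the \emph{unchanged} point $y_1$, this holds if{f} $(y_1)_i(0)=0$, i.e.\ if{f} $A_i\in\F$ by \prettyref{lem:aetrev}. The main obstacle, and the crux of the whole argument, is exactly this preservation of the first block: plain \lp{AET} applied to $\binom{x_1}{x_2}$ could return a uniformly recurrent proximal point whose $\mathcal{A}$-block is a \emph{different} minimal idempotent ultrafilter on $\mathcal{A}$, so that agreement with $\F$ would fail; it is only the extension clause of \lp{eAET} that guarantees $\F((n_i)_i)\cap\mathcal{A}=\F$.
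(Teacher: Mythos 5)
Your proposal is correct and follows essentially the same route as the paper's proof: convert $\F$ to the point $y_1$ via \prettyref{lem:aetrev}, apply \lp{eAET} with $x_2$ coding $\mathcal{A}'$ to obtain $y_2$, and recover $\F((n_i)_i)$ via \prettyref{pro:ip} and the construction of \prettyref{lem:aetmin}, with agreement on $\mathcal{A}$ following because the first block $y_1$ is preserved. Your additional remarks (why plain \lp{AET} would not suffice, and closing $\mathcal{A}'$ under downward translations) only make explicit what the paper leaves implicit.
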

\begin{proof}
  By \prettyref{lem:aetrev} there is a point $y_1 \in {(2^\Nat)}^\Nat$ that is uniformly recurrent and proximal to the point $x_1$ as in \eqref{eq:pointx} (for the algebra $\mathcal{A}$) and with $(y_1)_i(0)=0 \IFF A_i\in \F$. Now let $x_2$ be as in \eqref{eq:pointx} now for the algebra $\mathcal{A}'$ then by the \lp{eAET} there exists an $y_2$ such that $\begin{pmatrix} y_1 \\ y_2 \end{pmatrix}$ is uniformly recurrent and proximal to $\begin{pmatrix} x_1 \\ x_2 \end{pmatrix}$. By the construction in \prettyref{lem:aetmin} there exists an partial minimal idempotent ultrafilter $\F=\F((n_i)_i)$ for the algebra $\mathcal{A'}$. Since the membership of $X\in\mathcal{A}$ only depends on $y_1$ we have that $\F((n_i)_i)\cap \mathcal{A} = \F$.
\end{proof}

Replacing Theorem~3.5 in \cite{aK15} with the previous lemma one can now show the following variant of Theorem~2.6 of \cite{aK15}.
\begin{theorem}\label{thm:conserv}
  The system $\ls{ACA_0^\omega} + \lpf{\mu} + \lp{IHT} + \lpf{\Umin}$ is $\Pi^1_2$\nobreakdash-conservative over $\ls{ACA_0^\omega} + \lp{eAET}$.
\end{theorem}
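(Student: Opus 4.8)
The plan is to adapt, essentially verbatim, the proof of Theorem~9 of \cite{aKidem} — which establishes the $\Pi^1_2$-conservativity of $\lpf{\Uidem}$ over the corresponding base theory — and to feed in \prettyref{lem:15} at exactly the place where that argument invokes Theorem~15 of \cite{aKidem}. I first observe that the target theory already proves everything in the source theory apart from $\lpf{\Umin}$: since $\lp{eAET}$ implies $\lp{AET}$, which by \prettyref{cor:aetiht} is equivalent to $\lp{IHT}$, and since $\ls{ACA_0^\omega}$ provides $\mu$, the system $\ls{ACA_0^\omega}+\lp{eAET}$ proves $\ls{ACA_0^\omega}+\lpf{\mu}+\lp{IHT}$. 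Thus the entire content of the conservation result is the elimination of the higher-order object $\U$ asserted by $\lpf{\Umin}$ from a derivation of a $\Pi^1_2$ sentence.

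Concretely, I would start from a derivation of a $\Pi^1_2$ sentence $\Forall{f^1}\Exists{g^1}\varphi_0(f,g)$, with $\varphi_0$ arithmetical, in $\ls{ACA_0^\omega}+\lpf{\mu}+\lp{IHT}+\lpf{\Umin}$, and remove $\U$ by replacing it with a finite refining sequence of partial minimal idempotent ultrafilters. Since the derivation is finite, the clauses governing $\U$ are applied only to finitely many set-terms $X_1,\dots,X_m$; these, together with their downward translates, generate a countable downward translation algebra $\mathcal{A}_0$, on which \prettyref{lem:aetmin} (an instance of \lp{AET}, hence of \lp{eAET}) yields a partial minimal idempotent ultrafilter $\F_0=\F((n_i)_i)$. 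The idempotency clause then forces the filter to decide the sets $\{\,n\mid X_j-n\in\F_0\,\}$, which need not lie in $\mathcal{A}_0$; closing under this operation enlarges the algebra to $\mathcal{A}_1$, and \prettyref{lem:15} provides an extension $\F_1$ on $\mathcal{A}_1$ that is again a partial minimal idempotent ultrafilter and satisfies $\F_1\cap\mathcal{A}_0=\F_0$. Iterating, I obtain a finite refining chain $\F_0\subseteq\F_1\subseteq\dots\subseteq\F_{n-1}$, with $n$ bounded by the depth of nested idempotency/minimality applications in the derivation; the refinement property $\F_{i+1}\cap\mathcal{A}_i=\F_i$ makes all the filter-membership statements occurring in the proof coherent. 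Substituting $(\F_i)_{i<n}$ for $\U$ throughout turns the given derivation into one of $\varphi_0(f,g)$ using only $\ls{ACA_0^\omega}+\lp{eAET}$.

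The hard part will be the extension step preserving minimality, namely \prettyref{lem:15}, and this is the only place where \lp{eAET} rather than mere \lp{IHT} is required. In \cite{aKidem} the analogous extension of a partial \emph{idempotent} ultrafilter could be secured from \lp{IHT} alone, because the IP-limit of \prettyref{pro:ip} produces a proximal point but gives no control over uniform recurrence; to retain the syndeticity clause~\eqref{eq:min} while enlarging from $\mathcal{A}_i$ to $\mathcal{A}_{i+1}$ and simultaneously respecting the values already fixed on $\mathcal{A}_i$, one must extend the uniformly recurrent proximal point $y_i$ attached to $\F_i$ by \prettyref{lem:aetrev} to a uniformly recurrent proximal point on the product system for $\mathcal{A}_{i+1}$, which is exactly what \lp{eAET} delivers and what \prettyref{lem:aetmin} and \prettyref{lem:aetrev} convert back into the extended minimal filter. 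Beyond this, I expect only the bookkeeping of the finite chain — checking that the substitution of $(\F_i)_{i<n}$ for $\U$ sends each clause of $\lpf{\Umin}$ to a theorem of $\ls{ACA_0^\omega}+\lp{eAET}$ and that refinement keeps the choices consistent — to be laborious; but it is identical in structure to the corresponding verification in \cite{aKidem} and needs no new idea beyond \prettyref{lem:15}.
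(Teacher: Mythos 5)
Your proposal is correct and follows essentially the same route as the paper: run the term-elimination argument of Theorem~9 of \cite{aKidem} unchanged, building the finite refining chain of partial minimal idempotent ultrafilters, and substitute \prettyref{lem:15} for Theorem~15 of \cite{aKidem} at the extension step, which is indeed the only place where \lp{eAET} rather than \lp{IHT} is needed. The only detail the paper makes explicit that you leave implicit is that the extra syndeticity clause of \lpf{\Umin} is arithmetic and hence can be rendered quantifier-free via \lpf{\mu}, so that the elimination machinery of Section~4 of \cite{aKidem} applies verbatim.
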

\begin{proof}
  One first notes that the Section~4 of \cite{aK15} goes through unchanged since \lpf{\Umin} differs from \lpf{\Uidem} only by 
  \[
  \left\{\, n\in\Nat \mid X - n \in \U \,\right\}\text{ is syndetic}
  \]
  which is arithmetic and can be made quantifier free using \lpf{\mu}.

  Now one just replaces Theorem~3.5 in \cite{aK15} in the construction of the approximation of the ultrafilter with \prettyref{lem:15}.
\end{proof}

Since the sequence of algebras is directly given as terms in \cite{aK15} the principle \lp{eAET'} suffices to carry out the above construction and one obtains.

\begin{corollary}\label{col:conserv}
  The system $\ls{ACA_0^\omega} + \lpf{\mu} + \lp{IHT} + \lpf{\Umin}$ is $\Pi^1_2$\nobreakdash-conservative over $\ls{ACA_0^\omega} + \lp{eAET'}$.
\end{corollary}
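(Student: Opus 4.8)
The plan is to follow the proof of \prettyref{thm:conserv} essentially verbatim and to observe that at the single place where \lp{eAET} enters---namely through \prettyref{lem:15}---the weaker principle \lp{eAET'} already suffices. The whole architecture of Section~4 of \cite{aKidem} is unaffected, since, exactly as in \prettyref{thm:conserv}, the syndeticity clause distinguishing \lpf{\Umin} from \lpf{\Uidem} is arithmetic and becomes quantifier-free in the presence of \lpf{\mu}. Thus the only thing to redo is the construction that, along the approximation of the ultrafilter by a refining sequence of partial minimal idempotent ultrafilters $(\F_i)_{i<n}$ on an increasing chain of countable algebras $(\mathcal{A}_i)_{i<n}$, produces the next ultrafilter from the previously fixed data.

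The key observation is that in the conservation proof of \cite{aKidem} the chain of algebras is not an arbitrary given sequence but is produced explicitly by \emph{terms}: the algebra $\mathcal{A}_{i+1}$ collects exactly the finitely many new sets on which the ultrafilter is queried once the answers recorded in $\F_0,\dots,\F_i$ are fixed, and those answers are in turn read off from the coding points $y_0,\dots,y_i$ via the correspondence of \prettyref{lem:aetrev} (recall that $A\in\F_i$ holds iff the relevant coordinate of $y_i$ vanishes at $0$). Since the coding map \eqref{eq:pointx} and the term operations building $\mathcal{A}_{i+1}$ are continuous, and since each queried answer depends only on a finite part of the $y_j$ and is therefore locally constant, the coding point $x_{i+1}$ of $\mathcal{A}_{i+1}$ is a continuous function $x_{i+1}=t_{i+1}(y_0,\dots,y_i)$ of the previously constructed points. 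This is precisely the triangular dependency demanded by \lp{eAET'_\mathnormal{n}}.

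First I would therefore replace the $n$ successive appeals to \lp{eAET} (one per application of \prettyref{lem:15}) by a single appeal to \lp{eAET'_\mathnormal{n}} in the system $(\mathcal{X},T)=\big((2^\Nat)^\Nat,T^\Nat\big)$ of \prettyref{lem:aetmin}, fed with the continuous functions $(t_i)_{i<n}$ just described. This yields in one stroke a point $(y_i)_{i<n}$ that is uniformly recurrent in the $n$-fold product and proximal to $(t_0,t_1(y_0),\dots,t_{n-1}(y_0,\dots,y_{n-2}))=(x_0,\dots,x_{n-1})$. Applying the construction of \prettyref{lem:aetmin} to the $n$-fold product system, which is again of the form $((2^\Nat)^\Nat,T^\Nat)$, with the point $(x_0,\dots,x_{n-1})$ and its uniformly recurrent proximal companion $(y_i)_{i<n}$, produces a single sequence $(n_j)_j$ and hence a partial minimal idempotent ultrafilter $\F((n_j)_j)$ for the union $\bigcup_{i<n}\mathcal{A}_i$; the refining chain is recovered by setting $\F_i:=\F((n_j)_j)\cap\mathcal{A}_i$. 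The refinement property $\F_{i+1}\cap\mathcal{A}_i=\F_i$ is immediate, because membership of a set of $\mathcal{A}_i$ in $\F((n_j)_j)$ depends, through the correspondence of \prettyref{lem:aetrev}, only on the first coordinates $y_0,\dots,y_i$. From here the conservation argument concludes exactly as in \prettyref{thm:conserv}.

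I expect the main obstacle to be the bookkeeping that certifies the two claims on which everything hinges: that the algebra-generating operations extracted from the functional interpretation in \cite{aKidem} really are continuous (equivalently, given by terms on the product spaces), and, correspondingly, that delivering the whole tuple $(y_i)_{i<n}$ at once via \lp{eAET'} matches the step-by-step dependency that the unwound proof imposes---i.e.\ that $x_{i+1}$ is permitted to depend on $y_0,\dots,y_i$ but on nothing later. Once the dependency pattern is confirmed to be exactly the triangular one of \lp{eAET'_\mathnormal{n}}, no further idea is needed: the rest is the verification, already carried out in \prettyref{lem:aetmin}, \prettyref{lem:aetrev}, and \prettyref{lem:15}, that the extracted sequence $(n_j)_j$ defines a partial minimal idempotent ultrafilter with the desired restrictions.
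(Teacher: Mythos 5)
Your proposal is correct and follows essentially the same route as the paper: the paper's justification for this corollary is precisely the observation that in the conservation argument of \cite{aKidem} the increasing chain of algebras is given directly by terms, so the triangular dependency $x_{i+1}=t_{i+1}(y_0,\dots,y_i)$ required by \lp{eAET'_\mathnormal{n}} is exactly what occurs, and a single application of \lp{eAET'_\mathnormal{n}} replaces the iterated uses of \lp{eAET} in \prettyref{lem:15}. Your elaboration of the bookkeeping (continuity of the coding, recovery of the refining chain by restriction) is more detailed than the paper's one-sentence argument but matches it in substance.
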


\begin{theorem}
  $\ls{ACA_0^\omega} + \lpf{\mu} + \lpf{\Umin} \vdash \lp{eAET'}$.
\end{theorem}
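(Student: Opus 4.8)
The plan is to feed the minimal idempotent ultrafilter $\U$ supplied by $\lpf{\Umin}$ into the $\U$-limit construction of \prettyref{lem:aetrev}, and to resolve the recursive dependence of the target point $(t_0,t_1(y_0),\dots)$ by a finite recursion of length $n$ that uses the \emph{same} ultrafilter $\U$ in every coordinate. Fixing $n$, I would prove $\lp{eAET'_\mathnormal{n}}$ and then take the union over $n$ to obtain $\lp{eAET'}$.

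First I would isolate the general form of the computation carried out in \prettyref{lem:aetrev}: for \emph{any} compact topological dynamical system $(\mathcal{X},S)$ and any point $z\in\mathcal{X}$, the limit $\ulim{\U}{m} S^m z$ exists, is uniformly recurrent, and is proximal to $z$. The existence of the limit now comes from compactness of $\mathcal{X}$ together with $\U$ being a genuine ultrafilter (rather than from the special shape $\F((n_i))$): this is exactly the $\U$-limit apparatus for sequences in compact metric spaces developed from $\lpf{\mu}$ in \cite{aK12b,aKidem}. Uniform recurrence is obtained as in \prettyref{lem:aetrev}: for $B:=\{\,m\mid d(S^m z,y)<\epsilon/2\,\}\in\U$, minimality of $\U$ makes $\{\,n\mid B-n\in\U\,\}$ syndetic, and continuity of $S^n$ together with the defining property of the $\U$-limit gives $\{\,n\mid B-n\in\U\,\}\subseteq\{\,n\mid d(S^n y,y)<\epsilon\,\}$; a superset of a syndetic set is syndetic. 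Proximality of $z$ and $y$ is the idempotency argument reproduced from the final display of \prettyref{lem:aetrev}.

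Next, given a sequence $(t_i)_{i<n}$ of continuous maps $t_i\colon\mathcal{X}^i\to\mathcal{X}$, I would define $(y_i)_{i<n}$ by recursion on $i$ via
\[
y_i := \ulim{\U}{m} T^m\, t_i(y_0,\dots,y_{i-1}),
\]
the length-$n$ recursion being available in $\ls{ACA_0^\omega}+\lpf{\mu}$ since each step is an explicit functional of the previously constructed values. The crucial point — the only place where using a single ultrafilter matters — is that $\U$-limits commute coordinatewise with the finite product transformation, so the combined point $(y_0,\dots,y_{n-1})$ is precisely $\ulim{\U}{m}(T\times\cdots\times T)^m$ applied to the target point $(t_0,t_1(y_0),\dots,t_{n-1}(y_0,\dots,y_{n-2}))$: indeed the $i$-th coordinate of this product limit is $\ulim{\U}{m} T^m t_i(y_0,\dots,y_{i-1})=y_i$ by construction. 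Applying the general statement above to the $n$-fold product system $(\mathcal{X}^n, T\times\cdots\times T)$ and this target point immediately yields that $(y_0,\dots,y_{n-1})$ is uniformly recurrent and proximal to the target, which is $\lp{eAET'_\mathnormal{n}}$.

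The main obstacle is not conceptual but formal: one must ensure that the $\U$-limit of a sequence in a compact metric space is produced as a genuine higher-order object and respects the metric coding of the space, which is exactly what $\lpf{\mu}$ and the last (coding) clause of $\lpf{\Umin}$ are for, and what the ultrafilter-limit machinery of \cite{aK12b,aKidem} provides. Once that is in place, the coordinatewise commutation of $\U$-limits with finite products and the finite recursion are routine.
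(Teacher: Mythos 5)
Your proposal is correct and follows essentially the same route as the paper: take $\U$-limits of the orbit in the $n$-fold product system applied to the recursively defined target point, using minimality for uniform recurrence and idempotency for proximality. The only difference is presentational — the paper cites Theorem~19.26 of \cite{HS12} for the fact that $\ulim{\U}{m} S^m z$ is uniformly recurrent and proximal to $z$, whereas you re-derive it by adapting the argument of \prettyref{lem:aetrev}.
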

\begin{proof}
  Let $(t_i)_{i<k}$ be as in the definition of \lp{eAET'} and $\U$ be a minimal idempotent ultrafilter. By Theorem~19.26 of \cite{HS12},
  $\ulim{\U}{n} T^n(t_0) =: y_0$ is uniformly recurrent and proximal to $t_0$. Since $\U$ is a minimal idempotent ultrafilter also 
  \[
  \ulim{\U}{n} T^n
  \begin{pmatrix}
    t_0 \\
    t_1(y_0)
  \end{pmatrix}
  = 
  \begin{pmatrix}
    y_0 \\
    y_1
  \end{pmatrix}
  \]
  exists and is again uniformly recurrent (now in $(\mathcal{X} \times \mathcal{X}, T \times T)$) and proximal to $t_0,t_1(y_0)$. 
  Iterating this construction yields a solution to \lp{eAET'}.
\end{proof}

\begin{corollary}[to \prettyref{thm:conserv}]
  
\prettyref{thm:conserv} remains true if one replaces the system by the following.
\begin{multline*}
  \ls{ACA_0^\omega} + \lpf{\mu} + \lp{IHT} \vdash \\
  \Forall{f} \big(\Exists \U \,\text{\rm [$\U$ is a minimal idempotent ultrafilter extending $t_\F(f)$]} \IMPL \Exists{g} \lf{A}(f,g)\big)
  \end{multline*}
  where $t_\F$ is a closed term such that $t_\F(f)$ codes a partial minimal idempotent ultrafilter. (Cf.~Remark~4.4 of \cite{aK15}.)
\end{corollary}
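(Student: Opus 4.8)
The plan is to read off the conditional content of the proof of \prettyref{thm:conserv}, in precisely the manner in which Remark~20 of \cite{aKidem} extracts such a statement from the conservativity theorem for $\lpf{\Uidem}$; the sole modification is that \prettyref{lem:15} takes the role of Theorem~15 of \cite{aKidem}. Recall that \prettyref{thm:conserv} is established by taking a proof of $\Forall{f}\Exists{g}\lf{A}(f,g)$ in $\ls{ACA_0^\omega}+\lpf{\mu}+\lp{IHT}+\lpf{\Umin}$ and running the proof-transformation of Section~4 of \cite{aKidem}, which replaces the single ultrafilter by a finite refining chain of partial minimal idempotent ultrafilters and thereby reduces the claim to the existence of such a chain. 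Tracking the dependence of this transformation on $f$ yields closed terms producing an increasing family of countable downward translation algebras $\mathcal{A}_0(f)\subseteq\dots\subseteq\mathcal{A}_m(f)$ together with a base filter $t_\F(f)$ on $\mathcal{A}_0(f)$, and a derivation of $\Exists{g}\lf{A}(f,g)$ from \emph{any} refining chain of partial minimal idempotent ultrafilters over these algebras that extends $t_\F(f)$. In \prettyref{thm:conserv} this chain is supplied, one algebra at a time, by \lp{eAET} through \prettyref{lem:15}.

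First I would fix $f$ and assume the hypothesis, namely that some minimal idempotent ultrafilter $\U$ extending $t_\F(f)$ exists. The crucial point is that the required chain need no longer be constructed by \lp{eAET}: it can simply be read off from $\U$. Applying the correspondence of \prettyref{lem:aetrev} to $\U$ and to each algebra $\mathcal{A}_i(f)$ --- just as the argument establishing $\lp{eAET'}$ from $\lpf{\Umin}$ reads uniformly recurrent proximal points off a genuine ultrafilter --- yields for every $i$ a partial minimal idempotent ultrafilter $\F_i$ agreeing with $\U$ on $\mathcal{A}_i(f)$. Since all the $\F_i$ agree with the single ultrafilter $\U$ on their algebras they refine one another, and since $\U$ extends $t_\F(f)$ the chain starts at $t_\F(f)$; thus $(\F_i)_{i\le m}$ is exactly a refining chain over $(\mathcal{A}_i(f))_{i\le m}$ extending $t_\F(f)$, and the minimality of each member is furnished by \prettyref{lem:aetrev}. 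Feeding $(\F_i)_{i\le m}$ into the extracted derivation yields $\Exists{g}\lf{A}(f,g)$, and discharging the assumption on $\U$ gives the displayed implication.

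The whole argument lives in $\ls{ACA_0^\omega}+\lpf{\mu}+\lp{IHT}$: the principle \lp{eAET} disappears because the chain is now supplied by the assumed $\U$ rather than built, $\lpf{\mu}$ serves to render the arithmetical membership and syndeticity conditions quantifier-free and to read the $\F_i$ off $\U$, and \lp{IHT} is retained because the extracted derivation and the term $t_\F$ still rest on the Hindman/IP machinery of Section~4 of \cite{aKidem}. I expect the main obstacle to be the bookkeeping of the ultrafilter-elimination itself --- verifying that every appeal to the axioms of $\lpf{\Umin}$ in the original proof is confined to one of the finitely many algebras $\mathcal{A}_i(f)$ generated by the proof-terms applied to $f$, and that the base of the chain is captured by a single closed term $t_\F$. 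This is the content imported from Remark~20 of \cite{aKidem}; the passage from the idempotent to the minimal case is harmless, since the additional syndeticity clause \eqref{eq:min} is arithmetical, hence quantifier-free under $\lpf{\mu}$, and is preserved by the correspondence of \prettyref{lem:aetrev}.
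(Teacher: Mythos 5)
You have read the corollary in the wrong direction, and as a result your argument establishes a different (and essentially vacuous) statement while missing the actual content. The corollary keeps the \emph{conclusion} of \prettyref{thm:conserv} --- that $\ls{ACA_0^\omega} + \lp{eAET}$ proves the $\Pi^1_2$-sentence $\Forall{f}\Exists{g}\lf{A}(f,g)$ --- and weakens its \emph{hypothesis}: instead of a proof of $\Forall{f}\Exists{g}\lf{A}(f,g)$ from the axiom \lpf{\Umin}, one is now given a proof (in $\ls{ACA_0^\omega}+\lpf{\mu}+\lp{IHT}$) of $\Exists{g}\lf{A}(f,g)$ from the \emph{stronger} assumption that there is a minimal idempotent ultrafilter extending the specific partial one coded by $t_\F(f)$. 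Your proposal instead takes a proof from \lpf{\Umin} as input and produces the displayed conditional as output. That direction is trivial --- any $\U$ extending $t_\F(f)$ is in particular a witness to \lpf{\Umin}, so the implication follows by weakening without any appeal to \prettyref{lem:aetrev} or to the chain machinery --- and it is not what is claimed; the subsequent theorem (that the existence, for every $f$, of a minimal idempotent ultrafilter extending $t_\F(f)$ proves \lp{eAET}) only yields an exact characterization of the $\Pi^1_2$-consequences when paired with the elimination direction.

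The content you are missing is the following. When one runs the elimination of Section~4 of \cite{aKidem} on a proof that uses the hypothesis that $\U$ extends $t_\F(f)$, the finite refining chain of partial minimal idempotent ultrafilters can no longer be chosen freely: its first member must be (an extension of) $t_\F(f)$. This is where \prettyref{lem:aetrev} enters --- not to read approximations off an assumed $\U$ (no such $\U$ is available when working in $\ls{ACA_0^\omega}+\lp{eAET}$), but to convert the given partial minimal idempotent ultrafilter $t_\F(f)$ into a uniformly recurrent point $y_1$ proximal to the point $x_1$ coding its algebra, so that \prettyref{lem:15} (i.e.\ \lp{eAET}) can then extend this seed, algebra by algebra, to the larger algebras generated by the proof terms applied to $f$. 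In particular \lp{eAET} does not ``disappear'' as you assert: it is still needed to build every approximation beyond the first, and the verification of $\Exists{g}\lf{A}(f,g)$ takes place in $\ls{ACA_0^\omega}+\lp{eAET}$, not in $\ls{ACA_0^\omega}+\lpf{\mu}+\lp{IHT}$.
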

\begin{proof}
  Use \prettyref{lem:aetrev} to obtain the first approximation of the ultrafilter. Then continue using \prettyref{lem:15} as in the proof of \prettyref{thm:conserv}.
\end{proof}
\begin{theorem}
  Over $\ls{ACA_0^\omega}+\lpf{\mu}$  the statement 
  \[
  \Forall{f} \big(\Exists \U \,\text{\rm [$\U$ is a minimal idempotent ultrafilter extending $t_\F(f)$]}
  \] 
  for a suitable term $t_\F$ proves \lp{eAET}.
\end{theorem}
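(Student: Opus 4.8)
The plan is to run the $\U$-limit argument of the preceding theorem ``in reverse'', using the clause ``$\U$ extends $t_\F(f)$'' to pin down the prescribed first coordinate $y_1$. So fix a compact topological dynamical system $(\mathcal{X}_1,T_1)$ together with points $x_1,y_1$ witnessing the conclusion of \lp{AET} (i.e.\ $y_1$ uniformly recurrent and proximal to $x_1$), and a second system $(\mathcal{X}_2,T_2)$ with a point $x_2$. I must produce a $y_2$ such that $(y_1,y_2)$ is uniformly recurrent and proximal to $(x_1,x_2)$ in the product $(\mathcal{X}_1\times\mathcal{X}_2,T_1\times T_2)$.

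First I would apply \prettyref{pro:ip} to the solution $x_1,y_1$ to obtain an increasing sequence $(n_j)_j$ with $\iplim_{n\to\FS{(n_j)_j}}T_1^n x_1 = y_1$. Reading the construction of \prettyref{lem:aetmin} in this setting, this sequence defines the partial minimal idempotent ultrafilter $\F((n_j)_j)$ attached to the countable downward translation algebra $\mathcal{A}$ generated by the itineraries $\{\,n\mid T_1^n x_1\in U\,\}$ of the orbit of $x_1$. I would choose the term $t_\F$ so that, on a code $f$ of such a sequence $(n_j)_j$, one has $t_\F(f)=\F((n_j)_j)$; assembling this higher-order object from the sequence is exactly where $\lpf{\mu}$ is used, as in \prettyref{sec:minhigh}. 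Instantiating the hypothesis at this $f$ then furnishes a genuine (higher-order) minimal idempotent ultrafilter $\U$ with $\F((n_j)_j)\subseteq\U$.

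The second step recovers $y_1$ and simultaneously extends it. For each $\epsilon>0$ the set $\{\,n\mid d(T_1^n x_1,y_1)<\epsilon\,\}$ contains a tail $\FS{(n_j)_{j\ge k}}$ and hence lies in $\F((n_j)_j)\subseteq\U$; therefore $\ulim{\U}{n}T_1^n x_1 = y_1$. Since $\mathcal{X}_2$ is compact the limit $y_2:=\ulim{\U}{n}T_2^n x_2$ exists, and computing the limit coordinatewise gives $\ulim{\U}{n}(T_1\times T_2)^n(x_1,x_2) = (y_1,y_2)$. Because $\U$ is minimal idempotent, Theorem~19.26 of \cite{HS12} — the very input used in the preceding theorem for \lp{eAET'} — guarantees that this $\U$-limit is uniformly recurrent and proximal to $(x_1,x_2)$. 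Thus $y_2$ is the required extension and \lp{eAET} follows.

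I expect the only delicate point to be the first step: one must check that the sequence produced by \prettyref{pro:ip} in an arbitrary (possibly connected) compact metric space $\mathcal{X}_1$ genuinely defines a \emph{partial minimal idempotent} ultrafilter, so that the hypothesis is applicable. What makes this routine is that, for the partial ultrafilter attached to the itinerary algebra $\mathcal{A}$, the minimality clause \eqref{eq:min} has only to be verified for the itinerary sets $X\in\mathcal{A}\cap\F$, and for those the argument of \prettyref{lem:aetmin} transports without change — with ``$(T^{n'}y_i)(0)=0$'' replaced by ``$T_1^{n'}y_1\in U$'', the identity $\iplim_{n\to\FS{(n_j)_j}}T_1^{n}x_1 = y_1$ propagated through $T_1^{n'}$ by continuity, and the required syndeticity supplied by the uniform recurrence of $y_1$. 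Once $t_\F(f)=\F((n_j)_j)$ is known to be a partial minimal idempotent ultrafilter, the remaining steps are precisely the $\U$-limit computation already carried out for \lp{eAET'}.
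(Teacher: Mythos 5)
Your proposal is correct and follows essentially the same route as the paper's own proof: apply \prettyref{pro:ip} to $x_1,y_1$ to get the IP-sequence $(n_j)_j$, observe via the proof of \prettyref{lem:aetmin} that $\F((n_j)_j)$ is a partial minimal idempotent ultrafilter on the itinerary algebra of $x_1$ (definable with $\mu$, whence the term $t_\F$), instantiate the hypothesis to get $\U\supseteq\F((n_j)_j)$, and conclude with $\ulim{\U}{n}(T_1\times T_2)^n(x_1,x_2)=(y_1,y_2)$ and Theorem~19.26 of \cite{HS12}. Your extra care in checking that the minimality clause \eqref{eq:min} transfers to the itinerary algebra of a general compact metric space is a point the paper leaves implicit, but the argument is the same.
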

\begin{proof}
  Let $x_1,y_1,x_2$ be given as in the definition of \lp{eAET} then by \prettyref{pro:ip} and the proof of \prettyref{lem:aetmin} there exists an increasing sequence $(n_i)_i$ such that 
  \[
  \iplim_{n\to\FS{(n_i)_i}} T^n x_1 = y_1
  \]
  and 
  \[
  \F = \left\{\, X\subseteq \Nat \sizeMid \Exists{k} \FS{(n_i)_{i=k}^\infty} \subseteq X \,\right\}.
  \]
  is a partial minimal idempotent ultrafilter for the algebra generated by $\{\, n \in \Nat \mid d(T^n x_1,y_1) < 2^{-k} \,\}$. It is easy to see that $\F$ is definable from $x_1,y_1$ using $\mu$.
  For a minimal idempotent ultrafilter $\U$ extending $\F$ we have then
  \[
  \ulim{\U}{n} T^n 
  \begin{pmatrix} 
    x_1 \\ x_2 
  \end{pmatrix} = 
  \begin{pmatrix} 
    y_1 \\ y_2 
  \end{pmatrix}
  \]
  and by Theorem~19.26 of \cite{HS12} the point  $\begin{pmatrix} y_1 \\ y_2 \end{pmatrix}$ is uniformly recurrent and proximal to $\begin{pmatrix} x_1 \\ x_2 \end{pmatrix}$.
\end{proof}

\begin{remark}
  Like in \cite[Remark~4.3]{aK15} all the previous results on minimal idempotent ultrafilter over $\Nat$ also apply to minimal idempotent ultrafilters over any other countable semigroup $G$. The proofs are formulated such that neither  commutativity nor any other specify property of $\Nat$ has been used. However, for the construction of the partial minimal idempotent ultrafilter one then need the Auslander-Ellis theorem for this particular semigroup.

  For \lp{AET} the analysis Blass, Hirst, Simpson shows that \lp{AET} for a semigroup $G$ is equivalent to \lp{ACA_0} plus \lp{IHT} for this group which is, if $G$ is not trivial, equivalent to \lp{IHT} for $\Nat$. For \lp{eAET},\lp{eAET'} we do not where their variant for different groups is equivalent to $G$. However, given the previous fact we conjecture that they are also equivalent.
\end{remark}

\begin{figure}
  \[
  \xymatrix @C=0.4cm @R=0.7cm{ & 
    \txt{
      on countable \\ algebras
    } & 
    \txt{
      as  higher-order object \\
      ($\Pi^1_2$-consequences)} \\
    & \save []+<1.6cm,0cm>* \txt{\ls{ACA_0^+}} \ar[d]_{\ref{f4}} 
    \restore & \txt{ } \\
    \text{minimal idempotent ultrafilters$^{\ref{f1}}$} & \lp{AET} \ar @{<->}[d]_(0.3){\ref{f4},\ref{f5}} & \lp{eAET}, \lp{eAET'} \ar[d] \ar[l]\\
    \text{idempotent ultrafilters$^{\ref{f2}}$} & \lp{IHT} \ar@{->}[d]_(0.32){\ref{f4}} \ar@{=}[r]& \lp{IHT} \ar[d] 
    \\
    \text{non-principal ultrafilters$^{\smash[t]{\ref{f3}}}$} & \lp{ACA_0}\ar@{=}[r]  & \lp{ACA_0}  
    \ar @{.} "2,1"+<-2.5cm,-0.55cm>;"2,3"+<1.9cm,-0.55cm>
    \ar @{.} "3,1"+<-2.5cm,-0.55cm>;"3,3"+<1.9cm,-0.55cm>
    \ar @{.} "4,1"+<-2.5cm,-0.55cm>;"4,3"+<1.9cm,-0.55cm>
    \ar @{.} "1,2"+<-1.2cm,0.5cm>;"4,2"+<-1.2cm,-1.8cm>
    \ar @{.} "1,2"+<1.4cm,0.5cm>;"2,2"+<1.4cm,1.6ex>
    \ar @{.} "2,2"+<1.4cm,-1.4ex>;"4,2"+<1.4cm,-1.8cm>
  }
  \]
  \begin{flushleft}\small
    Arrows indicate all known implications and non-implications except for $\lp{ACA_0^+} \nrightarrow \lp{ACA_0}$.
    
    \smallskip
    \begin{enumerate*}[label=(\arabic*)]
    \item\label{f1} \prettyref{thm:m}, \prettyref{thm:conserv} and discussion below,
    \item\label{f2} \cite{aK15},
    \item\label{f3} \prettyref{thm:npu}, \cite{aK12b}, see also \cite{hT14},
    \item\label{f4} \cite{BHS87},
    \item\label{f5} \prettyref{cor:aetiht}.
    \end{enumerate*}
  \end{flushleft}
  \vspace{-2ex}
  \caption{}
\end{figure}

\section{Discussion and Questions}

\begin{question}
  What is the strength of \lp{eAET} and \lp{eAET'}?  
\end{question}

A set $X\subseteq \Nat$ is called \emph{central} if one of the following equivalent conditions holds.
\begin{enumerate}
\item There exists a compact topological dynamical system $(\mathcal{X},T)$ and points $x,y\in\mathcal{X}$ with $x,y$ are proximal and $y$ is uniformly recurrent (in other words they satisfy the conclusion of \lp{AET}) such that 
\[
X = \{ n \mid d(T^n x,y) < \epsilon \}
\]
for an $\epsilon$.
\item $X$ is an element of a minimal idempotent ultrafilter.
\item $X$ is syndetic and an IP-set (i.e.~contains a set of the form $\FS{Y}$ for an infinite set $Y$).
\end{enumerate}
see \cite[Chap.~8 \S 3]{hF81} for the original definition and \cite{HS12} for this equivalences.

It is easy to see that \lp{AET} proves that each finite partition of $\Nat$ contains a central set, see \cite[Theorem~8.8]{hF81}.
In the same way \lp{eAET} proves that each finite partition of a central set contains a central set (in other words being central is partition stable), see Remark at the end of Chap.~8 \S 3 of \cite{hF81}. It is open whether \lp{AET} is sufficient for this.

\begin{question}
  What is the strength of the statement that each finite partition of a central set contains a central set?
\end{question}

\bibliographystyle{amsplain}
\bibliography{../bib}

\end{document}